\definecolor{Red}{rgb}{0.9,0.1,0.1}
\newcommand{\tightsum}{  \hspace{-5mm}\sum_{j=1}^{\min(\tau, t-(\tau+1))}\hspace{-5mm}}
\begin{document}
\title{Slow Learners are Fast}

\author{%
  \name John Langford  \email jl@hunch.net \\
  \name Alexander J. Smola \email alex@smola.org \\
  \name Martin Zinkevich \email maz@yahoo-inc.com \\
  \addr Yahoo!~Labs, Great America Parkway, Santa Clara, CA 95051 USA
}

\editor{U.N. Known --- to be submitted to JMLR}

\maketitle

\begin{abstract}%
  Online learning algorithms have impressive convergence properties
  when it comes to risk minimization and convex games on very large
  problems. However, they are inherently sequential in their design
  which prevents them from taking advantage of modern multi-core
  architectures. In this paper we prove that online learning with
  delayed updates converges well, thereby facilitating parallel online
  learning.
\end{abstract}

%\vfill
% DO NOT EDIT THIS --- AUTOMATICALLY UPDATED BY SVN EVERY TIME YOU COMMIT
%svn propset svn:keywords "Author Id Rev Date URL" prm.tex

% DO NOT EDIT THIS --- AUTOMATICALLY UPDATED BY SVN EVERY TIME YOU COMMIT
%\trflush

\section{Introduction}
\label{sec:intro}

Online learning has become the paradigm of choice for tackling very
large scale estimation problems. Their convergence properties are well
understood and have been analyzed in a number of different frameworks
such as by means of asymptotics \cite[]{MurYosAma94}, game theory
\cite[]{HazAgaSat07}, or stochastic programming
\cite[]{NesVia00}. Moreover, learning-theory guarantees show that $O(1)$
passes over a dataset suffice to obtain optimal estimates
\cite[]{BotLeC04,BotBou07}. All those properties combined suggest that online
algorithms are an excellent tool for addressing learning problems.

This view, however, is slightly deceptive for several reasons: current
online algorithms process one instance at a time. That is, they
receive the instance, make some prediction, incur a loss, and update
an associated parameter. In other words, the algorithms are entirely
sequential in their nature. While this is acceptable in single-core
processors, it is highly undesirable given that the number of
processing elements available to an algorithm is growing exponentially
(e.g.\ modern desktop machines have up to 8 cores, graphics cards up
to 1024 cores). It is therefore very wasteful if only one of these
cores is actually used for estimation.

A second problem arises from the fact that network and disk I/O have not
been able to keep up with the increase in processor speed. A typical
network interface has a throughput of 100MB/s and disk arrays have
comparable parameters. This means that current algorithms reach their
limit at problems of size 1TB whenever the algorithm is I/O bound
(this amounts to a training time of 3 hours), or even smaller problems
whenever the model parametrization makes the algorithm CPU
bound.

Finally, distributed and cloud computing are unsuitable for today's
online learning algorithms. This creates a
pressing need to design algorithms which break the sequential
bottleneck. We propose two variants. To our
knowledge, this is the first paper which provides \emph{theoretical
  guarantees} combined with empirical evidence for such an
algorithm. Previous work, e.g.\ by \cite{DelBen07} proved rather
inconclusive in terms of theoretical and empirical guarantees.

In a nutshell, we propose the following two variants: several
processing cores perform stochastic gradient descent
\emph{independently} of each other while sharing a common parameter
vector which is updated asynchronously. This allows us to accelerate
computationally intensive problems whenever gradient
computations are relatively expensive. A second variant assumes that
we have linear function classes where parts of the function can be
computed \emph{independently} on several cores. Subsequently the
results are combined and the combination is then used for a descent step.

A common feature of both algorithms is that the update occurs
with some delay: in the first case other
cores may have updated the parameter vector in the meantime, in the second case, other
cores may have already computed parts of the function for the
subsequent examples before an update.

\section{Algorithm}

\subsection{Platforms}

We begin with an overview of three platforms which are available for
parallelization of algorithms.
The differ in their structural parameters, such as
synchronization ability, latency, and bandwidth and consequently they
are better suited to different styles of algorithms. This description
is not comprehensive by any means. For instance, there exist numerous
variants of communication paradigms for distributed and cloud
computing ranging from fully independent Folding@Home algorithms
\cite[]{ShiPan00} to sophisticated pipelines like the Drayad
architecture \cite[]{Isardetal07}.

\begin{description}
\item[Shared Memory Architectures:] The commercially available 4-16 core
  CPUs on servers and desktop computers fall into this category. They
  are general purpose processors which operate on a joint memory space
  where each of the processors can execute arbitrary pieces of code
  independently of other processors. Synchronization is easy via
  shared memory/interrupts/locks. The critical shared resource is memory
  bandwidth. This problem can be somewhat alleviated by exploiting
  affinity of processes to specific cores.

  A second example of a shared memory architecture are graphics
  cards. There the number of processing elements is vastly higher (512
  on high-end consumer graphics cards), although they tend to be
  bundled into groups of 8 cores (also referred to as multiprocessing
  elements), each of which can execute a given piece of code in a
  data-parallel fashion. An issue is that explicit synchronization
  between multiprocessing elements is difficult --- it requires
  computing kernels on the processing elements to complete. This means
  that an explicit synchronization mechanism may be undesirable since
  it comes at the expense of a large performance penalty or a
  significant increase in latency. Implicit synchronization via shared
  memory is still possible. Critical resources are availability of
  memory: consumer grade graphics cards have in the order of 512MB
  high speed RAM per chip. Communication between multiple chips is
  nontrivial.
\item[Clusters:] To increase I/O bandwidth one can combine several
  computers in a cluster using MPI or PVM as the underlying
  communications mechanism. A clear limit here is bandwidth
  constraints and latency for inter-computer communication. On Gigabit
  Ethernet the TCP/IP latency can be in the order of 100$\mu$s, the
  equivalent of $10^5$ clock cycles on a processor and network
  bandwidth tends to be a factor 100 slower than memory
  bandwdith. Infiniband is approximately one order of magnitude faster
  but it is rarely found in off-the-shelf server farms.
\item[Grid Computing:] Computational paradigms such as MapReduce
  \cite[]{ChuKimLinYuetal07} are well suited
  for the parallelization of batch-style algorithms
  \cite[]{TeoVisSmoLe09}. In comparison to cluster configurations
  communication and latency are further constrained. For instance,
  often individual processing elements are unable to communicate
  directly with other elements with disk / network storage being the
  only mechanism of inter-process data transfer. Moreover, the latency
  is significantly increased, typically in the order of seconds, due
  to the interleaving of Map and Reduce processing stages.
\end{description}
Of the above three platform types we will only consider the first
two since latency plays a critical role in the analysis of the class
of algorithms we propose. While we do not exclude the possibility of
devising parallel online algorithms suited to grid computing, we
believe that the family of algorithm proposed in this paper is
unsuitable and a significantly different synchronization paradigm
would need to be explored.

\subsection{Delayed Stochastic Gradient Descent}

Many learning problems can be written as convex minimization
problems. It is our goal to find some parameter vector $x$ (which is
drawn from some Banach space $\Xcal$ with associated norm
$\nbr{\cdot}$) such that the sum over convex functions $f_i: \Xcal \to
\RR$ takes on the smallest value possible. For instance, (penalized)
maximum likelihood estimation in exponential families with fully
observed data falls into this category, so do Support Vector Machines
and their structured variants. This also applies to distributed games
with a communications constraint within a team.

At the outset we make no special assumptions on the order or form of
the functions $f_i$. In particular, an adversary may choose to order
or generate them in response to our previous choices of $x$. In other
cases, the functions $f_i$ may be drawn from some distribution (e.g.\
whenever we deal with induced losses). It is our
goal to find a sequence of $x_i$ such that the cumulative loss $\sum_i
f_i(x_i)$ is minimized.
With some abuse of notation we identify the average empirical and expected
loss \emph{both} by $f^*$. This is possible, simply by redefining $p(f)$ to
be the uniform distribution over $F$. Denote by
\begin{align}
  f^*(x) := \frac{1}{|F|} \sum_{i} f_i(x)
  \text{ or }
  f^*(x) & := \Eb_{f \sim p(f)}[f(x)] \\
  \text{ and correspondingly }
  x^* & := \argmin_{x \in \Xcal} f^*(x)
\end{align}
the average risk. We assume that $x^*$ exists (convexity does
\emph{not} guarantee a bounded minimizer) and that it satisfies
$\nbr{x^*} \leq R$ (this is always achievable, simply by intersecting
$\Xcal$ with the unit-ball of radius $R$). We propose the following
algorithm:

\begin{algorithm}[h]
  \caption{Delayed Stochastic Gradient Descent \label{alg:delay}}
   \begin{algorithmic}
    \STATE {\bfseries Input:} Feasible space $X\subseteq \RR^n$, annealing schedule $\eta_t$ and delay $\tau \in
    \NN$
    \STATE Initialization: set $x_1 \ldots, x_\tau = 0$ and compute corresponding $g_t = \nabla f_t(x_t)$.
    \FOR{$t = \tau + 1$ {\bfseries to} $T + \tau$}
    \STATE Obtain $f_t$ and incur loss $f_t(x_t)$
    \STATE Compute $g_t := \nabla f_t(x_t)$
    \STATE Update $x_{t+1} = \argmin_{x\in X}\nbr{x-(x_t - {\color{Red} \eta_t g_{t-\tau}})}$ (Gradient Step and Projection)
    \ENDFOR
  \end{algorithmic}
\end{algorithm}
In this paper the annealing schedule will be either $\eta_t=\frac{1}{\sigma (t - \tau)}$ or
$\eta_t=\frac{\sigma}{\sqrt{t-\tau}}$. Often, $X=\RR^n$.
If we set $\tau = 0$, algorithm~\ref{alg:delay} becomes an entirely
standard stochastic gradient descent algorithm. The only difference
with delayed stochastic gradient descent is that we do \emph{not}
update the parameter vector $x_t$ with the current gradient $g_t$ but
rather with a delayed gradient $g_{t-\tau}$ that we computed $\tau$
steps previously. Later we will extend this simple stochastic gradient
descent model in two ways: firstly we will extend the updates to
implicit updates as they arise from the use of Bregman divergences
(see Section~\ref{sec:strong}), leading to variants such as parallel
exponentiated gradient descent. Secondly, we will modify bounds which
are dependent on strong convexity \cite[]{BarHazRak08,DoLeFoo09} to
obtain adaptive algorithms which can take advantage of well-behaved
optimization problems in practice.

\subsection{Templates}

\paragraph{Asynchronous Optimization}

Assume that we have $n$ processors which can process data
independently of each other, e.g.\ in a multicore platform, a graphics
card, or a cluster of workstations. Moreover, assume that computing
the gradient of $f_t(x)$ is at least $n$ times as
expensive\footnote{More fine-grained variants are possible where we
  write only parts of the parameter vector $x$ at a time, thereby
  requiring locks on only parts of $x$ by an updating processor. We
  omit details of such modifications as they are entirely technical
  and do not add to the key idea of the paper.}  as it is to update
$x$ (read, add, write). This occurs, for instance, in the case of
conditional random fields \cite[]{RatBagZin07,VisSchSchMur06}, in
planning \cite[]{RatBagZin06}, and in ranking \cite[]{WeiKarLeSmo08}.

The rationale for delayed updates can be seen in the following
setting: assume that we have $n$ cores performing stochastic
gradient descent on different instances $f_t$ while sharing one common
parameter vector $x$. If we allow each core in a round-robin fashion
to update $x$ one at a time then there will be a delay of $\tau = n-1$
between when we see $f_t$ and when we get to update $x_{t+\tau}$. The
delay arises since updates by different cores cannot happen
simultaneously. This setting is preferable whenever computation of
$f_t$ itself is time consuming.

\begin{figure}[htb]
  \centering
  \includegraphics[width=0.7\textwidth]{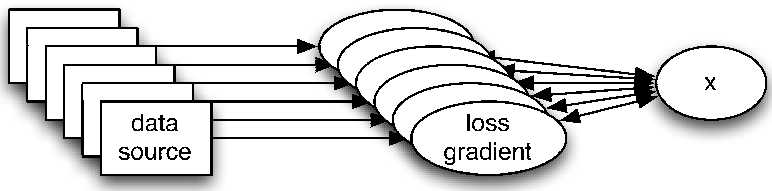}
  \caption{Data parallel stochastic gradient descent with shared
    parameter vector. Observations are partitioned on a per-instance
    basis among $n$ processing units. Each of them computes its own
    loss gradient $g_t = \partial_x f_t(x_t)$. Since each computer
    is updating $x$ in a round-robin fashion, it takes a delay of
    $\tau = n-1$ between gradient computation and when the gradients
    are applied to $x$.}
  \label{fig:roundrobin}
\end{figure}

Note that there is no need for explicit thread-level synchronization
between individual cores. All we need is a read / write-locking
mechanism for $x$ or alternatively, atomic updates on the parameter
vector.\footnote{There exists some limited support for this in the
  Intel Threading Building Blocks library for the x86 architecture.}
This is important since thread synchronization on GPUs is rudimentary
at best. Keeping the \emph{state} synchronized by a shared memory
architecture is key.

%note, we need to set up a good locking mechanism that uses only
%shared memory without the risk of deadlocks. the risk with just
%writing a flag is that during read-check-write another thread would
%write the lock. this would happen extremely RARELY but would lead to
%occasionally unstable code. we can / need to lock only parts of the
%parameter vector at a time. in particular if the state vector has
%100s of MBs we should not make the lock exclusive.

On a multi-computer cluster we can use a similar mechanism simply by
having one server act as a state-keeper which retains an up-to-date
copy of $x$ while the loss-gradient computation clients can retrieve
at any time a copy of $x$ and send gradient update messages to the
state keeper. Note that this is only feasible whenever the message
size does not exceed $\frac{1}{n}$ of the bandwidth of the
state-keeper. This suggests an alternative variant of the algorithm
which is considerably less demanding in terms of bandwidth
constraints.

\paragraph{Pipelined Optimization}

The key impediment in the previous template is that it required
significant amounts of bandwidth solely for the purpose of
synchronizing the state vector. This can be addressed by parallelizing
computing the function value $f_i(x)$ explicitly rather than
attempting to compute several instances of $f_i(x)$
simultaneously. Such situations occur, e.g.\ when $f_i(x) =
g(\inner{\phi(z_i)}{x})$ for high-dimensional $\phi(z_i)$. If we
decompose the data $z_i$ (or its features) over $n$ nodes we can
compute partial function values and also all partial updates
\emph{locally}. The only communication that is required is to combine
partial values and to compute the gradient with respect to
$\inner{\phi(z_i)}{x}$.

This causes delay since the second stage is processing results of the
first stage while the latter has already moved on to processing
$f_{t+1}$ or further. While the architecture is quite different, the
effects are identical: the parameter vector $x$ is updated with some
delay $\tau$. Note that here $\tau$ can be much smaller than the
number of processors and mainly depends on the latency of the
communication channel. Also note that in this configuration the memory
access for $x$ is entirely local.

\paragraph{Randomization}

Order of observations matters for delayed updates: imagine that an
adversary, aware of the delay $\tau$ bundles each
of the $\tau$ most similar instances $f_t$ together. In this case we
will incur a loss that can be $\tau$ times as large as in the
non-delayed case and require a learning rate which is $\tau$ times
smaller. The reason being that only after seeing $\tau$ instances of
$f_t$ will we be able to respond to the data. Such highly correlated
settings do occur in practice: for instance, e-mails or search
keywords have significant temporal correlation (holidays, political
events, time of day) and cannot be treated as iid data.

A simple strategy can be used to alleviate this problem: decorrelate
observations by random permutations of the instances. The price we pay
for this modification is a delay in updating the model parameters
(there need not be any delay in the \emph{prediction} itself) since
obviously the range of decorrelation needs to exceed $\tau$
considerably.

\section{Lipschitz Continuous Losses}

We begin with a simple game theoretic analysis that only requires $f_t$ to be
\emph{convex} and where the subdifferentials are bounded $\nbr{\nabla
  f_t(x)} \leq L$ by some $L > 0$. Denote by $x^*$ the minimizer of
$f^*(x)$. It is our goal to bound the regret $R$ associated
with a sequence $X = \cbr{x_1, \ldots, x_T}$ of parameters
\begin{align}
  \label{eq:regret}
  R[X] := \sum_{t=1}^T f_t(x_t) - f_t(x^*).
\end{align}
Such bounds can then be converted into bounds on the expected
loss. See e.g.\ \citep{ShaSinSre07} for an example of a randomized
conversion. Since all $f_t$ are convex we can upper bound $R[X]$ via
\begin{align}
  \label{eq:convbound}
  R[X] \leq \sum_{t=1}^T \inner{\nabla f_t(x_t)}{x_t - x^*} =
  \sum_{t=1}^T \inner{g_t}{x_t - x^*}.
\end{align}
Next define a potential function measuring the distance between $x_t$
and $x^*$. In the more general analysis this will become a Bregman
divergence. We define $D(x \| x') := \frac{1}{2} \nbr{x - x'}^2$.
To prove regret bounds we need the following auxiliary lemma which
bounds the instantaneous risk at a given time:
\begin{lemma}
  \label{lem:auxlemma}
  For all $x^*$ and for all $t >\tau$, if $X=\RR^n$, the following expansion holds:
%   \begin{align}
%     \label{eq:auxbound}
%     \inner{x_{t-\tau} - x^*}{g_{t-\tau}} =
%     \frac{1}{2} \eta_t \nbr{g_{t-\tau}}^2 +
%     \frac{D(x^*\|x_{t}) - D(x^*\|x_{t+1})}{\eta_t} +
%     \sum_{j=1}^{\tau} \eta_{t-j} \inner{g_{t-\tau-j}}{g_{t-\tau}}
%   \end{align}
%   Moreover, for all\footnote{for $t=\tau$, the summation has no terms.} $t \in \cbr{\tau, \ldots, 2\tau - 1}$ we have that
%   \begin{align}
%     \label{eq:auxbound}
%     \inner{x_{t-\tau} - x^*}{g_{t-\tau}} =
%     \frac{1}{2} \eta_t \nbr{g_{t-\tau}}^2 +
%     \frac{D(x^*\|x_{t}) - D(x^*\|x_{t+1})}{\eta_t} +
%     \sum_{j=1}^{t-\tau} \eta_{t-j} \inner{g_{t-\tau-j}}{g_{t-\tau}}
%   \end{align}
%   Summarizing:
  \begin{align}
    \label{eq:auxbound}
    \inner{x_{t-\tau} - x^*}{g_{t-\tau}} =
    \frac{1}{2} \eta_t \nbr{g_{t-\tau}}^2 +
    \frac{D(x^*\|x_{t}) - D(x^*\|x_{t+1})}{\eta_t} +
    \sum_{j=1}^{\min(\tau,t-(\tau+1))} \hspace{-4mm}
    \eta_{t-j} \inner{g_{t-\tau-j}}{g_{t-\tau}}
  \end{align}
  Furthermore, \eq{eq:auxbound} holds as an \emph{upper bound} if $X\subset \RR^n$.
\end{lemma}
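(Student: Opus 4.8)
The plan is to decompose the quantity according to the moment at which the gradient $g_{t-\tau}$ is actually applied. Writing $\inner{x_{t-\tau}-x^*}{g_{t-\tau}} = \inner{x_t - x^*}{g_{t-\tau}} + \inner{x_{t-\tau}-x_t}{g_{t-\tau}}$, the first summand is exactly what a standard undelayed one-step analysis controls, since it pairs $g_{t-\tau}$ with the iterate $x_t$ that it updates; it will produce the $\frac12\eta_t\nbr{g_{t-\tau}}^2$ and potential-difference terms. The second summand is a pure ``delay correction'' measuring how far the parameter has drifted between the time $t-\tau$ at which $g_{t-\tau}$ was computed and the time $t$ at which it is used; this will produce the cross-gradient sum.

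First I would treat $\inner{x_t - x^*}{g_{t-\tau}}$ by completing the square on the update rule. In the unconstrained case $x_{t+1}=x_t-\eta_t g_{t-\tau}$, so expanding $D(x^*\|x_{t+1}) = \frac12\nbr{x^*-x_t+\eta_t g_{t-\tau}}^2$ and dividing by $\eta_t$ gives the exact identity $\inner{x_t-x^*}{g_{t-\tau}} = \frac12\eta_t\nbr{g_{t-\tau}}^2 + \eta_t^{-1}(D(x^*\|x_t)-D(x^*\|x_{t+1}))$. When $X\subset\RR^n$ and $x_{t+1}=\Pi_X(x_t-\eta_t g_{t-\tau})$, the only change is that non-expansiveness of the Euclidean projection, $\nbr{x^*-x_{t+1}}\le\nbr{x^*-(x_t-\eta_t g_{t-\tau})}$ for $x^*\in X$, replaces the equality in the expansion of $D(x^*\|x_{t+1})$ by ``$\le$'', turning this piece into an upper bound of the same form.

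Next I would expand the delay correction by telescoping the displacements: $x_{t-\tau}-x_t = \sum_{s=t-\tau}^{t-1}(x_s - x_{s+1})$. In the unconstrained case each step satisfies $x_s - x_{s+1} = \eta_s g_{s-\tau}$, so re-indexing with $j=t-s$ yields $\inner{x_{t-\tau}-x_t}{g_{t-\tau}} = \sum_j \eta_{t-j}\inner{g_{t-\tau-j}}{g_{t-\tau}}$. The upper limit $\min(\tau, t-(\tau+1))$ appears because the displacement is genuinely nonzero only for update steps $s\ge\tau+1$; for smaller $t$ the early iterates coincide (all equal to the initialization), truncating the sum. Combining the two summands gives the stated identity.

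The main obstacle is the cross term in the projected case. There $x_s-x_{s+1}$ is no longer $\eta_s g_{s-\tau}$ but $\eta_s g_{s-\tau}+p_s$, where $p_s := (x_s-\eta_s g_{s-\tau})-x_{s+1}$ is the projection residual, so the telescoped correction picks up an extra $\sum_s\inner{p_s}{g_{t-\tau}}$ of a priori indeterminate sign. I expect to control this using the variational (obtuse-angle) characterization of the projection, $\inner{p_s}{u-x_{s+1}}\le 0$ for all $u\in X$, together with the non-expansiveness bound $\nbr{x_s-x_{s+1}}\le\eta_s\nbr{g_{s-\tau}}$; in the regret analysis this cross term is in any case majorized by passing to $\inner{g_{t-\tau-j}}{g_{t-\tau}}\le\nbr{g_{t-\tau-j}}\nbr{g_{t-\tau}}\le L^2$, which is what ultimately makes the one-sided ``upper bound'' statement the operative one for $X\subset\RR^n$.
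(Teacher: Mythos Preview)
Your approach is essentially identical to the paper's: the same decomposition $\inner{x_{t-\tau}-x^*}{g_{t-\tau}} = \inner{x_t-x^*}{g_{t-\tau}} + \inner{x_{t-\tau}-x_t}{g_{t-\tau}}$, the same square-completion on the update rule for the first piece (the paper writes it as an expansion of $D(x^*\|x_{t+1})-D(x^*\|x_t)$, which is the same computation), and the same telescoping of the displacements $x_{t-j+1}-x_{t-j}=-\eta_{t-j}g_{t-\tau-j}$ for the second, with the upper summation limit $\min(\tau,t-(\tau+1))$ arising from the frozen initialization $x_1=\cdots=x_\tau$.

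On the projected case you are actually more careful than the paper, which dispatches it in one line by citing Zinkevich for ``distances can only decrease under projection.'' That argument cleanly handles only the $D(x^*\|x_{t+1})$ term; you correctly flag that the telescoped steps $x_s-x_{s+1}$ acquire projection residuals and so the cross sum is no longer literally $\sum_j\eta_{t-j}\inner{g_{t-\tau-j}}{g_{t-\tau}}$. Your fallback---bound each step by $\nbr{x_s-x_{s+1}}\le\eta_s\nbr{g_{s-\tau}}$ via non-expansiveness and observe that only the $L^2$ majorization is consumed downstream---is the right pragmatic fix; it delivers the inequality with $\nbr{g_{t-\tau-j}}\nbr{g_{t-\tau}}$ in place of the signed inner product, which is all that Theorem~\ref{th:delay} actually uses.
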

\begin{proof}
The divergence function allows us to decompose our progress via
\begin{align}
  \label{eq:gaussdecom}
  D(x^* \| x_{t+1}) - D(x^* \| x_{t})
%   & = \frac{1}{2} \nbr{x^* - x_t}^2 -   \frac{1}{2} \nbr{x^* - x_{t+1}}^2
%   \\
  & =
  \frac{1}{2} \nbr{x^* - x_t + x_t - x_{t+1}}^2 -
  \frac{1}{2} \nbr{x^* - x_t}^2 \\
  & =
  \frac{1}{2} \nbr{x^* - x_t + \eta_t g_{t-\tau}}^2 -
  \frac{1}{2} \nbr{x^* - x_t}^2 \\
  & = \frac{1}{2} \eta_t^2 \nbr{g_{t-\tau}}^2 - \eta_t \inner{x_t
    - x^*}{g_{t-\tau}} \\
  & = \frac{1}{2} \eta_t^2 \nbr{g_{t-\tau}}^2 - \eta_t
  \inner{x_{t-\tau} - x^*}{g_{t-\tau}} - \eta_t \inner{x_{t} -
    x_{t-\tau}}{g_{t-\tau}}
  \label{eq:gaussdecom-intermediate}
\end{align}
We can now expand the inner product between delayed parameters
$\inner{x_{t} - x_{t-\tau}}{g_{t-\tau}}$ in terms of differences
between gradients. Here we need to distinguish the initialization: for
$\tau \leq t < 2\tau$ we only obtain differences between $t-\tau$
gradients, since the optimization protocol initializes $x_t = x_1$ for
all $t \leq \tau$. This yields
\begin{align}
  \inner{x_{t} - x_{t-\tau}}{g_{t-\tau}} =\hspace{-8mm}
  \sum_{j=1}^{\min(t-(\tau+1), \tau)} \hspace{-7mm} \inner{x_{t-(j-1)} -  x_{t-j}}{g_{t-\tau}} =
  -\hspace{-8mm}\sum_{j=1}^{\min(t-(\tau+1), \tau)} \hspace{-7mm}\eta_{t-j} \inner{g_{t-\tau-j}}{g_{t-\tau}}
\end{align}
%   & = \frac{1}{2} \eta_t^2 \nbr{g_{t-\tau}}^2 - \eta_t
%   \inner{x_{t-\tau} - x^*}{g_{t-\tau}} + \eta_t \sum_{j=1}^{\tau}\inner{x_{t-(j-1)} - x_{t-j}}{g_{t-\tau}} \\
%   & = \frac{1}{2} \eta_t^2 \nbr{g_{t-\tau}}^2 - \eta_t
%   \inner{x_{t-\tau} - x^*}{g_{t-\tau}} + \eta_t \sum_{j=1}^{\tau} \eta_{t-j} \inner{g_{t-\tau-j}}{g_{t-\tau}}
% \end{align}
Plugging the above into \eq{eq:gaussdecom-intermediate},
dividing both sides by $\eta_t$ and moving
$\inner{x_{t-\tau} - x^*}{g_{t-\tau}}$ to the LHS completes the proof.

To show that the inequality holds note that distances between vectors
can only decrease if we project onto convex sets. The argument
follows that of \cite{Zinkevich03}.
\end{proof}
%
% \begin{lemma}
%   \label{lem:auxlemma2}
%   For all $x^*$ and for all $t \geq \tau$, if projection is used, the following expansion holds:
%   \begin{align}
%     \label{eq:auxbound3}
%     \inner{x_{t-\tau} - x^*}{g_{t-\tau}} \leq
%     \frac{1}{2} \eta_t \nbr{g_{t-\tau}}^2 +
%     \frac{D(x^*\|x_{t}) - D(x^*\|x_{t+1})}{\eta_t} +
%     \sum_{j=1}^{\min(t-\tau,\tau)} \eta_{t-j} \inner{g_{t-\tau-j}}{g_{t-\tau}}
%   \end{align}
% \end{lemma}
% \begin{proof}
% The proof is similar to the proof of Lemma~\ref{lem:auxlemma}.
% \end{proof}
%
Note that the decomposition \eq{eq:auxbound} is very similar to
standard regret decomposition bounds, such as \citep{Zinkevich03}. The
key difference is that we now have an additional term characterizing
the correlation between successive gradients which needs to be
bounded. In the worst case all we can do is bound
$\inner{g_{t-\tau-j}}{g_{t-\tau}} \leq L^2$, whenever the gradients are
highly correlated, which leads to the following theorem:
\begin{theorem}
  \label{th:delay}
  Suppose all the cost functions are Lipschitz continuous with a constant $L$ and $\max_{x,x'\in X}D(x\|x')\leq F^2$. Given $\eta_t=\frac{\sigma}{\sqrt{t-\tau}}$ for some constant $\sigma>0$,
  the regret of the delayed update algorithm is bounded by
  \begin{align}
    R[X] \leq \sigma L^2 \sqrt{T} + F^2 \frac{\sqrt{T}}{\sigma} +
    L^2 \frac{\sigma \tau^2}{2} + 2 L^2 \sigma \tau \sqrt{T}
    % R[x] & \leq \sigma L^2\left (\sqrt{T}+\frac{\tau(\tau+1)}{2}+2\tau\sqrt{T} \right )+\frac{F^2}{\sigma}\sqrt{T}
  \end{align}
  and consequently for $\sigma^2 = \frac{F^2}{2 \tau L^2}$ and $T \geq
  \tau^2$ we obtain the bound
  \begin{align}
    \label{eq:optsigma-delay}
    R[X] \leq 4FL \sqrt{\tau T}
  \end{align}
\end{theorem}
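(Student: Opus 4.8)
The plan is to combine the convexity bound \eq{eq:convbound} with the per-step identity of Lemma~\ref{lem:auxlemma}, summed over the active steps, and then to bound the three resulting sums separately. Writing $s=t-\tau$, the regret summands $\inner{g_s}{x_s-x^*}$ for $s=1,\dots,T$ are exactly the left-hand sides $\inner{x_{t-\tau}-x^*}{g_{t-\tau}}$ of the lemma for $t=\tau+1,\dots,T+\tau$. Applying the lemma as an \emph{upper} bound (which is valid both for $X=\RR^n$ and $X\subset\RR^n$) and summing gives
\begin{multline*}
  R[X] \le \frac{1}{2}\sum_{t=\tau+1}^{T+\tau}\eta_t\nbr{g_{t-\tau}}^2
  + \sum_{t=\tau+1}^{T+\tau}\frac{D(x^*\|x_t)-D(x^*\|x_{t+1})}{\eta_t} \\
  + \sum_{t=\tau+1}^{T+\tau}\ \sum_{j=1}^{\min(\tau,t-(\tau+1))}\hspace{-3mm}\eta_{t-j}\inner{g_{t-\tau-j}}{g_{t-\tau}}.
\end{multline*}
I would call these the gradient-norm term, the telescoping term, and the correlation term, and treat each in turn.

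The first two are routine. Using $\nbr{g_{t-\tau}}\le L$ and $\eta_t=\sigma/\sqrt{t-\tau}$, the gradient-norm term is at most $\frac{1}{2}L^2\sigma\sum_{s=1}^T s^{-1/2}\le \frac{1}{2}L^2\sigma\cdot 2\sqrt T=\sigma L^2\sqrt T$. For the telescoping term I would apply summation by parts: since $\eta_t$ is decreasing, $1/\eta_t-1/\eta_{t-1}\ge0$, and since every divergence satisfies $D(x^*\|x_t)\le F^2$ while the trailing term is nonnegative and may be discarded, the sum collapses to $F^2/\eta_{T+\tau}=F^2\sqrt T/\sigma$. These recover the first two terms of the claimed bound.

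The correlation term is the crux, and the only place where the delay $\tau$ enters nontrivially. Bounding $\inner{g_{t-\tau-j}}{g_{t-\tau}}\le L^2$ by Cauchy--Schwarz leaves $L^2\sum_t\sum_j\eta_{t-j}$, which I would estimate by splitting on the value of $\min(\tau,t-(\tau+1))$. In the transient phase $t\le2\tau$ there are at most $t-\tau$ inner terms, and replacing each $\eta_{t-j}$ by the largest admissible value (attained at index $\tau+1$, namely $\eta_{\tau+1}=\sigma$) bounds this part by $L^2\sigma\sum_{t=\tau+1}^{2\tau}(t-\tau)=L^2\sigma\,\tau(\tau+1)/2$, yielding the $\frac{1}{2}L^2\sigma\tau^2$ contribution. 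In the steady-state phase $t>2\tau$ there are exactly $\tau$ inner terms, each at most $\eta_{t-\tau}=\sigma/\sqrt{t-2\tau}$, so this part is at most $L^2\sigma\tau\sum_{r\ge1}r^{-1/2}\le2L^2\sigma\tau\sqrt T$. The hard part will be the bookkeeping here: matching the inner summation limit to the correct count of surviving terms, and exploiting monotonicity of $\eta$ to replace every $\eta_{t-j}$ by its largest admissible value while never hitting the undefined $\eta_\tau$. Adding the four pieces gives the first displayed inequality.

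Finally, substituting $\sigma^2=F^2/(2\tau L^2)$ balances the first two terms, each becoming a multiple of $FL\sqrt{\tau T}$ (the gradient-norm term produces $FL\sqrt{T/\tau}$, absorbed into $FL\sqrt{\tau T}$ using $\tau\ge1$), and invoking $T\ge\tau^2$ to control $\tau^{3/2}\le\sqrt{\tau T}$ turns the transient piece into a multiple of $FL\sqrt{\tau T}$ as well. Summing the resulting constants gives a total strictly below $4$, which establishes \eq{eq:optsigma-delay}. Everything outside the transient/steady-state split of the correlation term is standard online-learning algebra.
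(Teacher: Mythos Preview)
Your approach is essentially identical to the paper's: both apply Lemma~\ref{lem:auxlemma} summed over $t=\tau+1,\dots,T+\tau$, bound the gradient-norm term by $\sigma L^2\sqrt{T}$, the divergence term via summation-by-parts by $F^2\sqrt{T}/\sigma$, and split the correlation term into a transient phase ($t\le 2\tau$) and a steady-state phase ($t>2\tau$), replacing $\eta_{t-j}$ by its largest admissible value in each. The final substitution and use of $\tau\ge1$, $T\ge\tau^2$ also match.

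One small bookkeeping slip: in the transient phase the inner sum has exactly $\min(\tau,t-(\tau+1))=t-\tau-1$ terms, not $t-\tau$. With your count you get $\sum_{t=\tau+1}^{2\tau}(t-\tau)=\tau(\tau+1)/2$, which is \emph{larger} than $\tau^2/2$, so it does not yield the claimed $\frac{1}{2}L^2\sigma\tau^2$ contribution. Using the correct count $t-\tau-1$ gives $\sum_{t=\tau+1}^{2\tau}(t-\tau-1)=\tau(\tau-1)/2\le\tau^2/2$, which is exactly what the paper does. This is the only point where your sketch deviates, and you already flagged the index matching as the delicate part.
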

\begin{proof}
Before we prove the claim, we briefly state a few useful
identities concerning sums.
\begin{align}
  \label{eq:p1sum}
  \sum_{i=1}^n i & = \frac{n (n+1)}{2} \\
  \label{eq:pneg12sum}
  \sum_{i=a}^b \frac{1}{2 \sqrt{i}} & \leq \int_{a-1}^b \frac{1}{2 \sqrt{x}} dx =
  \sqrt{b} - \sqrt{a-1} \leq \sqrt{b-a+1}
\end{align}
Summing over \eq{eq:auxbound} and using Lemma~\ref{lem:auxlemma}
yields the inequality
\begin{align}
  \nonumber
  \sum_{t=\tau + 1}^{T + \tau} \inner{x_{t-\tau} - x^*}{g_{t-\tau}}
  \leq &
  \sum_{t=\tau + 1}^{T + \tau} \frac{1}{2} \eta_t \nbr{g_{t-\tau}}^2 +
  \frac{D(x^*\|x_{t}) - D(x^*\|x_{t+1})}{\eta_t} +
%  \tightsum
  \!\!\!\!\!\!\!\!\sum_{j=1}^{\min(\tau,t-(\tau+1))} \!\!\!\!\!\!\!\! \eta_{t-j} \inner{g_{t-\tau-j}}{g_{t-\tau}}\\
\nonumber
  =&
  \sum_{t=\tau + 1}^{T + \tau} \sbr{\frac{1}{2} \eta_t
    \nbr{g_{t-\tau}}^2  +
  \sum_{j=1}^{\min(\tau,t-(\tau+1))} \eta_{t-j} \inner{g_{t-\tau-j}}{g_{t-\tau}}}\\
  &+\frac{D(x^*\|x_{\tau+1})}{\eta_{\tau+1}}-\frac{
    D(x^*\|x_{T+\tau+1})}{\eta_{T+\tau}} +
  \sum_{t=\tau + 2}^{T + \tau} \sbr{D(x^*\|x_{t})\left (\frac{1}{\eta_t}-\frac{1}{\eta_{t-1}}\right )
}
  \label{eq:theprototype}
\end{align}
%
%%%%%%%% step 1 %%%%%%%%%%
%
By the Lipschitz property of gradients and the definition of $\eta_t$
we can bound the first summand of the above risk inequality via
\begin{align}
  \label{eq:step1}
  \sum_{t=\tau + 1}^{T + \tau} \frac{1}{2} \eta_t \nbr{g_{t-\tau}}^2
  \leq \sum_{t=\tau + 1}^{T + \tau} \frac{1}{2} \eta_t L^2 =
%&\leq \sum_{t=\tau + 1}^{T + \tau} \frac{1}{2} \frac{\sigma}{\sqrt{t-\tau}} L^2\\
  \sum_{t=1}^{T} \frac{1}{2} \frac{\sigma}{\sqrt{t}} L^2
  \leq \sigma L^2 \sqrt{T}.
\end{align}
%
%%%%%%%% step 2 %%%%%%%%%%
%
Next we tackle the terms dependent on $D$. By the assumption on the diameter,
$D(x^*\| x_t) \leq F^2$ for all $x_t$. This yields
\begin{align}
  \frac{D(x^* \| x_{\tau + 1})}{\eta_{\tau+1}} +
  \sum_{t=\tau+2}^{T+\tau} D(x^*\|x_{t}) \left [\frac{1}{\eta_t} -
    \frac{1}{\eta_{t-1}}\right ]
  & \leq \frac{F^2}{\sigma} +
  \frac{F^2}{\sigma} \sum_{t=\tau+2}^{T + \tau}
  \left [\sqrt{t-\tau} - \sqrt{t-\tau-1}\right ]   \nonumber \\
  \label{eq:step2}
  &= \frac{F^2}{\sigma} \sbr{1 + \sqrt{T} -1} =
  \frac{F^2}{\sigma} \sqrt{T}
\end{align}
Here the second to last equality follows from the fact that we have a
telescoping sum. Note that we can discard the contribution of
$-\frac{D(x^*\|x_{T+\tau+1})}{\eta_{T+\tau}}$ since it is always
negative, hence the bound can only become tighter.
%
%%%%%% step 3
%

Finally, we address the contribution of the inner products between gradients.
By the Lipschitz property of the gradients we know that
$\inner{g_{t-\tau-j}}{g_{t-\tau}} \leq L^2$. Moreover, $\eta_t$
is monotonically decreasing, hence we can bound the correlation term
in Lemma~\ref{lem:auxlemma} via
\begin{align}
  \label{eq:correlationterm}
\tightsum \eta_{t-j}
\underbrace{\inner{g_{t-j}}{g_{t-\tau}}}_{\leq L^2}
% &\leq \min(t-\tau,t)\eta_{t-\min(t-\tau,\tau)} L^2\\
% &\leq \min(t-\tau,t)\eta_{\max(t-(t-\tau),t-\tau)} L^2\\
&\leq \min(t-(\tau+1),t)\eta_{\max(\tau,t-(\tau+1))} L^2.
\end{align}
Summing over all contributions yields
\begin{align*}
  \sum_{t=\tau + 1}^{T + \tau} \min(t-(\tau+1),t)\eta_{\max(\tau+1,t-(\tau+1))}
  &= \sum_{t=\tau+1}^{2\tau} (t-(\tau+1)) \eta_{\tau+1} +
  \sum_{t=2\tau+1}^{T + \tau} \tau \eta_{t-\tau} \\
  &= \sum_{t=\tau+1}^{2\tau} (t-(\tau+1)) \frac{\sigma}{\sqrt{1}} +
  \sum_{t=2\tau+1}^{T + \tau} \tau \frac{\sigma}{\sqrt{t - 2\tau}}
  \\
  & \leq \sigma \frac{\tau (\tau-1)}{2} + 2 \sigma \tau \sqrt{T-\tau}
  \leq \frac{\sigma \tau^2}{2} + 2 \sigma \tau \sqrt{T}
\end{align*}
Substituting the bounds for all three terms into the gradient bound
yields
\begin{align}
  R[X] \leq
  \sum_{t=\tau + 1}^{T + \tau} \inner{x_{t-\tau} - x^*}{g_{t-\tau}}
  \leq
  \sigma L^2 \sqrt{T} + F^2 \frac{\sqrt{T}}{\sigma} +
  L^2 \frac{\sigma \tau^2}{2} + 2 L^2 \sigma \tau \sqrt{T}
\end{align}
Plugging in $\sigma = \frac{F}{L \sqrt{2 \tau}}$ changes the RHS to
\begin{align*}
  R[X] \leq \frac{F L \sqrt{T}}{\sqrt{2 \tau}} + F L \sqrt{2 T \tau} + F L
  (\tau/2)^{\frac{3}{2}} + F L \sqrt{2 T \tau} \leq
  F L \sqrt{2 \tau T} \sbr{2 + \frac{1}{2 \tau} + \frac{\tau}{4 \sqrt{T}}}
\end{align*}
Using the fact that $\tau \geq 1$ (otherwise our analysis is vacuous)
and $T \geq \tau^2$ (it is reasonable to assume that we have at least
$O(\tau)$ data per processor) yields the claim.
\end{proof}
In other words the algorithm converges at rate $O(\sqrt{\tau T})$. This is similar to what we would expect in the worst case:
an adversary may reorder instances such as to maximally slow down
progress. In this case a parallel algorithm is no faster than a
sequential code. This result may appear overly pessimistic in practice
but the following example shows that such worst-case scaling behavior
is to be expected:
\begin{lemma}
  \label{lem:optibad}
  Assume that an optimal online algorithm
  with regard to a convex game achieves regret $R[m]$ after seeing
  $m$ instances. Then any algorithm which
  may only use information that is at least $\tau$ instances old has a
  worst case regret bound of $\tau R[m/\tau]$.
\end{lemma}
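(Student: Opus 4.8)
The plan is to prove the claim by an adversarial \emph{blow-up} reduction: I will take a hard instance for the ordinary (non-delayed) game on $m/\tau$ rounds and inflate it into an instance on $m$ rounds by repeating each cost function $\tau$ times, then argue that the delay prevents any algorithm from reacting within a block of repetitions, so that it must pay essentially $\tau$ times the non-delayed regret.

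Concretely, I fix any algorithm $\mathcal{A}$ that is constrained to use only information at least $\tau$ instances old, and partition its $m$ rounds into $m/\tau$ consecutive blocks of length $\tau$, block $s$ consisting of rounds $(s-1)\tau+1,\dots,s\tau$. The adversary commits to presenting one and the same convex function $f_s$ throughout all of block $s$. The first key observation is that for every round $t$ inside block $s$ we have $t-\tau\le (s-1)\tau$, so the information available to $\mathcal{A}$ at round $t$ is confined to functions from blocks $1,\dots,s-1$; in particular all $\tau$ plays $x_{(s-1)\tau+1},\dots,x_{s\tau}$ of $\mathcal{A}$ in block $s$ are determined \emph{before} $f_s$ is ever revealed.

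This lets me build from $\mathcal{A}$ a genuine non-delayed algorithm $\mathcal{B}$ for the $m/\tau$-round game: $\mathcal{B}$ internally simulates $\mathcal{A}$ on the inflated sequence and, at round $s$, plays the average $\bar{x}_s=\frac{1}{\tau}\sum_{t\in\text{block }s}x_t$ of $\mathcal{A}$'s block-$s$ iterates. By the previous observation $\bar{x}_s$ depends only on $f_1,\dots,f_{s-1}$, so $\mathcal{B}$ is a legitimate online player. Convexity of $f_s$ together with Jensen's inequality then gives $\tau f_s(\bar{x}_s)\le\sum_{t\in\text{block }s}f_s(x_t)$, while the inflated comparator loss is exactly $\tau$ times the compressed one (the inflated cumulative loss is $\tau\sum_s f_s$, so the two minimizers coincide and I may use the common optimum $x^*$). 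Summing over blocks yields that $\tau$ times the regret of $\mathcal{B}$ on the compressed sequence is at most the regret of $\mathcal{A}$ on the inflated sequence.

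To finish, I invoke optimality of $R[\,\cdot\,]$: since $R[m/\tau]$ is by hypothesis the regret of the \emph{optimal} online algorithm, it is a minimax value, so no online player --- in particular not $\mathcal{B}$ --- can have worst-case regret below $R[m/\tau]$. Hence the adversary can choose $f_1,\dots,f_{m/\tau}$, and with them the inflated sequence, so that $\mathcal{B}$ suffers regret at least $R[m/\tau]$, which forces the regret of $\mathcal{A}$ to be at least $\tau R[m/\tau]$. I expect the main subtlety to lie in steps two and three: pinning down exactly that the ``$\tau$-old'' constraint forbids $\mathcal{A}$ from using the current block's function, and using convexity to collapse the $\tau$ block-iterates into a single admissible move of $\mathcal{B}$ without losing more than the claimed factor of $\tau$.
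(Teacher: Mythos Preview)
Your proposal is correct and follows the same construction as the paper: inflate a hard $m/\tau$-round instance by repeating each function $\tau$ times, and use the delay to argue that the algorithm cannot react within a block. The paper's own proof is considerably more informal and simply asserts that a delayed algorithm facing $\tau$ copies ``will do no better'' than a non-delayed algorithm facing one copy; your reduction via the averaged play $\bar{x}_s$ and Jensen's inequality is a clean way to make that step rigorous, since it handles the possibility that $\mathcal{A}$ plays \emph{different} points across a block (which the paper glosses over).
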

\begin{proof} The proof is similar to the approach in~\cite{mester05}.
  Our construction works by designing a sequence of functions $f_i$
  where for a fixed $n \in \NN$ all $f_{n \tau + j}$ are identical
  (for $j \in \cbr{1, \ldots, n}$). That is, we send identical
  functions to the algorithm while it has no chance of responding to
  them. Hence, even an algorithm \emph{knowing} that we will see
  $\tau$ identical instances in a row but being disallowed to
  respond to them for $\tau$ instances will do no better than one
  which sees every instance once but is allowed to respond
  instantly. Consequently, the regret incurred will be $\tau$ times
  that of an algorithm seeing $m/\tau$ instances only once each time.
\end{proof}
The useful consequence of Theorem~\ref{th:delay} is that we are
guaranteed to converge \emph{at all} even if we encounter delay (the
latter is not trivial --- after all, we could end up with an
oscillating parameter vector for overly aggressive learning rates).
While such extreme cases hardly occur in practice, we need to make
stronger assumptions in terms of correlation of $f_t$ and the degree
of smoothness in $f_t$ to obtain tighter bounds.

We conclude this section by studying a particularly convenient case: the
setting when the functions $f_i$ are strongly convex with parameter $\lambda>0$ satisfying
\begin{align}
  \label{eq:strongcon}
  f(x^*) \geq f_i(x) + \inner{x^*-x}{\partial_x f(x)} + \frac{\lambda}{2}
  \nbr{x - x^*}^2
\end{align}
Here we can get rid of the $D(x^*\|x_1)$ dependency in the loss bound.
\begin{theorem}
  \label{th:strong}
  Suppose that the functions $f_i$ are strongly convex with parameter $\lambda>0$. Moreover,
  choose the learning rate $\eta_{t} = \frac{1}{\lambda(t-\tau)}$ for $t > \tau$
  and $\eta_t = 0$ for $t \leq \tau$. Then under the assumptions of Theorem~\ref{th:delay} we have the
  following bound:
  \begin{align}
    R[X] \leq \lambda\tau F^2+\sbr{\textstyle \frac{1}{2} + \tau} \frac{L^2}{\lambda}\left (1+\tau+\log T\right )
  \end{align}
\end{theorem}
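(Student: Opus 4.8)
The plan is to mirror the proof of Theorem~\ref{th:delay} but to exploit strong convexity so that the potential (divergence) terms \emph{cancel} rather than telescope into an $F^2/\sigma$ contribution, thereby eliminating the $D(x^*\|x_1)$ dependency. First I would replace the convexity bound \eq{eq:convbound} by the strong-convexity inequality \eq{eq:strongcon}: for every $s$ this gives $f_s(x_s)-f_s(x^*)\le \inner{g_s}{x_s-x^*}-\lambda D(x^*\|x_s)$, since $\tfrac{\lambda}{2}\nbr{x_s-x^*}^2=\lambda D(x^*\|x_s)$. Summing over $s$ and shifting the index by $\tau$ (so the sum runs over $t=\tau+1,\dots,T+\tau$ as in the proof of Theorem~\ref{th:delay}), then substituting the expansion of Lemma~\ref{lem:auxlemma} for each inner product $\inner{g_{t-\tau}}{x_{t-\tau}-x^*}$, produces the three familiar groups of terms — the squared-gradient terms $\tfrac12\eta_t\nbr{g_{t-\tau}}^2$, the telescoping divergence terms, and the gradient-correlation terms — together with the new penalty $-\lambda\sum_{s=1}^T D(x^*\|x_s)$.

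The crux is the interaction between the telescoped divergences and this penalty. Reorganizing the divergence terms exactly as in \eq{eq:theprototype}, the coefficient of $D(x^*\|x_t)$ is $\tfrac{1}{\eta_{\tau+1}}=\lambda$ at the first index and $\tfrac{1}{\eta_t}-\tfrac{1}{\eta_{t-1}}$ thereafter; with $\eta_t=\tfrac{1}{\lambda(t-\tau)}$ the latter also equals $\lambda$ for every $t$. This constant $\lambda$ is precisely what the strong-convexity penalty subtracts, so the two cancel wherever their index ranges overlap. Because the penalty is indexed by $x_s=x_{t-\tau}$ while the telescope is indexed by $x_t$, the overlap is offset by $\tau$: after cancellation the only surviving positive divergences are $\lambda\sum_{t=T+1}^{T+\tau} D(x^*\|x_t)$, the leftover tail of the telescope, while the unmatched penalties $-\lambda\sum_{s=1}^{\tau}D(x^*\|x_s)$ are negative and may be dropped (as may the boundary term $-D(x^*\|x_{T+\tau+1})/\eta_{T+\tau}$). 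Bounding each of the $\tau$ surviving divergences by $F^2$ gives the $\lambda\tau F^2$ term.

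It then remains to bound the squared-gradient and correlation sums under $\eta_t=\tfrac{1}{\lambda(t-\tau)}$. For the former, $\tfrac12\eta_t\nbr{g_{t-\tau}}^2 \leq \tfrac{L^2}{2\lambda(t-\tau)}$, and summing the resulting harmonic series gives $\tfrac{L^2}{2\lambda}(1+\log T)$. For the correlation sum I would use $\inner{g_{t-\tau-j}}{g_{t-\tau}}\leq L^2$ and the monotonicity of $\eta$ exactly as in \eq{eq:correlationterm}, bounding each inner sum by its number of terms times its largest step size, i.e.\ $\min(\tau,t-\tau-1)\,\eta_{\max(\tau+1,t-\tau)}$. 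Splitting the outer sum into a transient regime $t\leq 2\tau+1$, where the step size is frozen at $\tfrac1\lambda$ and the contribution is $O(\tfrac{\tau^2}{\lambda})$, and a steady-state regime $t>2\tau+1$, where a window of $\tau$ terms over $\{t-2\tau,\dots,t-\tau-1\}$ contributes $O(\tfrac{\tau}{\lambda(t-2\tau)})$ and sums to $O(\tfrac{\tau}{\lambda}\log T)$, yields a combined bound of order $\tfrac{L^2}{\lambda}\big(\tfrac{\tau^2}{2}+\tau\log T\big)$.

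Finally I would collect the three contributions; the coefficient of $\log T$ is $\tfrac12+\tau$ (the $\tfrac12$ from the gradient norms, the $\tau$ from the correlations), and absorbing the constant and $\tau^2$ remainders into the factor $(1+\tau+\log T)$ produces the stated $(\tfrac12+\tau)\tfrac{L^2}{\lambda}(1+\tau+\log T)$. The main obstacle is the second paragraph: getting the index bookkeeping of the delayed cancellation right, so that exactly $\tau$ tail divergences survive and \emph{no} $D(x^*\|x_1)$ term remains, is where the delay genuinely complicates the classical strongly-convex argument. The correlation-sum estimate is then a routine, if slightly tedious, split into transient and steady-state regimes.
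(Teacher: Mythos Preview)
Your proposal is correct and follows essentially the same approach as the paper: both start from the strong-convexity inequality, substitute Lemma~\ref{lem:auxlemma}, observe that with $\eta_t=\tfrac{1}{\lambda(t-\tau)}$ the divergence telescope leaves exactly $\tau$ surviving tail terms (yielding $\lambda\tau F^2$), and then bound the gradient-norm and correlation sums by harmonic series to get the $(\tfrac12+\tau)\tfrac{L^2}{\lambda}(1+\tau+\log T)$ factor. The only cosmetic differences are that the paper telescopes by splitting $\lambda(t-\tau)[D(x^*\|x_t)-D(x^*\|x_{t+1})]$ into a main telescope plus the residual $\lambda(D(x^*\|x_t)-D(x^*\|x_{t-\tau}))$, and that it merges the squared-gradient and correlation sums via $\eta_t\le\eta_{\max(t-\tau,\tau+1)}$ rather than bounding them separately as you do.
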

\begin{proof}
  The proof largely follows that of \cite{BarHazRak08}. The key
  difference is that now we need to take the additional contribution
  of the gradient correlations into account. Using
  \eq{eq:strongcon} we have
   \begin{align*}
     R[X] \leq & \sum_{t=\tau+1}^{T+\tau}
     \inner{x_{t-\tau} - x^*}{g_{t-\tau}} - \frac{\lambda}{2}
     \nbr{x_{t-\tau} - x^*}^2 \\
     \leq & \sum_{t=\tau+1}^{T+\tau}\frac{\eta_t}{2} \nbr{g_{t-\tau}}^2 +
     \hspace{-4mm} \sum_{j=1}^{\min(\tau,t-(\tau+1))} \hspace{-4mm} \eta_{t-j}
     \inner{g_{t-\tau-j}}{g_{t-\tau}} +
    \frac{D(x^*\|x_{t}) - D(x^*\|x_{t+1})}{\eta_t}
    - \frac{\lambda}{2}
     \nbr{x_{t-\tau} - x^*}^2 \\
%      \leq &     \sum_{t=\tau+1}^{T+\tau}\left (\frac{1}{2} \eta_t +\tau \eta_{t-\tau}\right )L^2 +
%     \frac{D(x^*\|x_{t}) - D(x^*\|x_{t+1})}{\eta_t}
%  - \frac{h_{t-\tau}}{2}
%      \nbr{x_{t-\tau} - x^*}^2 \\
     \leq & \sum_{t=\tau+1}^{T+\tau}
     \sbr{\frac{1}{2} \eta_{t} + \tau \eta_{\max(t-\tau,\tau+1)}} L^2 +
     \lambda(t-\tau) \sbr{ D(x^*\|x_t) - D(x^*\|x_{t+1})} - \frac{\lambda}{2}
     \nbr{x_{t-\tau} - x^*}^2\\
     \leq & \sum_{t=\tau+1}^{T+\tau}
     \sbr{\frac{1}{2} \eta_{t} + \tau \eta_{\max(t-\tau,\tau+1)}} L^2 +
     \lambda(t-\tau) \sbr{ D(x^*\|x_t) - D(x^*\|x_{t+1})} - \lambda
     D(x^*\|x_{t-\tau})\\
     \leq & \sum_{t=\tau+1}^{T+\tau}
     \sbr{\frac{1}{2} \eta_{t} + \tau \eta_{\max(t-\tau,\tau+1)}} L^2 +
     \lambda(t-(\tau+1)) D(x^*\|x_t) - \lambda(t-\tau)  D(x^*\|x_{t+1})\\
     &+\lambda(D(x^*\|x_t)-
     D(x^*\|x_{t-\tau}))
\end{align*}
Via telescoping:
\begin{align*}
R[X]\leq & ((\tau+1)-(\tau+1))D(x^*\|x_{\tau+1})-\lambda T D(x^*\|x_{T+\tau})\\
&+\sum_{t=1}^{\tau}\lambda(D(x^*\|x_{T+t})-
     D(x^*\|x_{t}))+\sum_{t=\tau+1}^{T+\tau}
\sbr{\frac{1}{2} \eta_{t} + \tau \eta_{\max(t-\tau,\tau+1)}} L^2\\
\leq & \lambda\tau F^2+\sum_{t=\tau+1}^{T+\tau}
\sbr{\frac{1}{2} \eta_{t} + \tau \eta_{\max(t-\tau,\tau+1)}} L^2
\end{align*}

By construction, $\eta_t$ (when $t\geq \tau+1$) is monotonically increasing, hence we have
$\eta_{t} \leq \eta_{\max(t-\tau,\tau+1)}$, so we can:
\begin{align*}
R[X]&\leq \lambda\tau F^2+\sbr{\frac{1}{2} + \tau} L^2\sum_{t=\tau+1}^{T+\tau}\eta_{\max(t-\tau,\tau+1)}\\
\leq & \lambda\tau F^2+\sbr{\frac{1}{2} + \tau} L^2 \sum_{t=1}^{T}\eta_{\max(t,\tau+1)}\\
\leq & \lambda\tau F^2+\sbr{\frac{1}{2} + \tau} L^2 \left (\frac{1}{\lambda} \tau +\sum_{t=1}^{T-\tau}\frac{1}{\lambda t} \right )\\
\leq & \lambda\tau F^2+\sbr{\frac{1}{2} + \tau} \frac{L^2}{\lambda} \left (\tau +1+\log (T-\tau)\right )
\end{align*}

\end{proof}
As before, we pay a linear price in the delay $\tau$.

\section{Decorrelating Gradients}

To improve our bounds beyond the most pessimistic case we need to
assume that the adversary is not acting in the most hostile fashion
possible. In the following we study the opposite case --- namely that
the adversary is drawing the functions $f_i$ iid from an arbitrary
(but fixed) distribution. The key reason for this requirement is that
we need to control the value of $\inner{g_t}{g_{t'}}$ for adjacent
gradients.

The flavor of the bounds we use will be in terms of the
\emph{expected} regret rather than an actual regret. Conversions from
expected to realized regret are standard. See e.g.\ \cite[Lemma 2]{NesVia00}
for an example of this technique. For this purpose we need to take
expectations of sums of copies of \eq{eq:auxbound} in
Lemma~\ref{lem:auxlemma}. Note that this is feasible since
expectations are linear and whenever products between more than one
term occur, they can be seen as products which are \emph{conditionally
  independent} given past parameters, such as $\inner{g_t}{g_{t'}}$
for $|t - t'| \leq \tau$ (in this case no information about $g_t$ can
be used to infer $g_{t'}$ or vice versa, given that we already know
all the history up to time $\min(t,t')-1$. Our informal argument can
be formalized by using martingale techniques. We omit the latter in
favor of a much more streamlined discussion. Since the argument is
rather repetitive (we will prove a number of different bounds) we will
not discuss issues with conditional expectations any further.

A key quantity in our analysis are bounds on the correlation between
subsequent instances. In some cases we will only be able to obtain
bounds on the \emph{expected} regret rather than the actual
regret. For the reasons pointed out in Lemma~\ref{lem:optibad} this is
an in-principle limitation of the setting.

Our first strategy is to assume that $f_t$ arises from a scalar
function of a linear function class. This leads to bounds which, while
still bearing a linear penalty in $\tau$, make do with considerably
improved constants. The second strategy makes stringent smoothness
assumptions on $f_t$, namely it assumes that the gradients themselves
are Lipschitz continuous. This will lead to guarantees for which the
delay becomes increasingly irrelevant as the algorithm
progresses.

% Finally, we will apply the same reasoning to strongly
% convex loss functions as discussed in Theorem~\ref{th:strong},
% yielding a much improved convergence guarantee. The latter should not
% come as a surprise since we effectively assume that the regret can be
% sandwiched between parabolae from above (via the smooth gradient) and
% from below (via strong convexity).

\subsection{Covariance bounds for linear function classes}

Many functions $f_t(x)$ depend on $x$
only via an inner product. They can be expressed as
\begin{align}
  \label{eq:linfun}
  f_t(x) = l(y_t, \inner{z_t}{x})
  \text{ and hence }
  g_t(x) = \nabla f_t(x) = z_t
  {\partial_{\inner{z_t}{x}} l(y_t, \inner{z_t}{x})}
\end{align}
Now assume that $\abr{\partial_{\inner{z_t}{x}} l(y_t,
  \inner{z_t}{x})} \leq \Lambda$ for all $x$ and all $t$. This holds,
e.g.\ in the case of logistic regression, the soft-margin hinge loss,
novelty detection. In all three cases we have $\Lambda = 1$. Robust
loss functions such as Huber's regression score \cite[]{Huber81} also
satisfy \eq{eq:linfun}, although with a different constant (the latter
depends on the level of robustness). For such
problems it is possible to bound the correlation between subsequent
gradients via the following lemma:

\begin{lemma}
  \label{lem:alpha}
  Denote by $(y,z), (y',z') \sim \Pr(y,z)$ random variables which are
  drawn independently of $x, x' \in \Xcal$. In this case
  \begin{align}
    \Eb_{y,z,y',z'}\sbr{\inner{\partial_x l(y,
        \inner{z}{x})}{\partial_x l(y', \inner{z'}{x'})}} \leq
    \Lambda^2 \Bigl\|{\Eb_{z,z'}\sbr{z' z^\top}}\Bigr\|_{\mathrm{Frob}}
    =: L^2 \alpha
  \end{align}
\end{lemma}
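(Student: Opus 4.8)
The plan is to collapse the gradient inner product to a single scalar product and then decouple the bounded part of the loss from the feature interaction by Cauchy--Schwarz. First I would invoke the linear structure \eq{eq:linfun}: by the chain rule $\partial_x l(y,\inner{z}{x}) = s\,z$ with the scalar $s := \partial_{\inner{z}{x}} l(y,\inner{z}{x})$ satisfying $\abr{s}\le\Lambda$, and likewise $\partial_x l(y',\inner{z'}{x'}) = s'\,z'$ with $\abr{s'}\le\Lambda$. Since the data is independent of the parameters I treat $x,x'$ as fixed, so $s$ is a function of the first sample and $s'$ of the second. The integrand is then $\inner{s z}{s' z'} = s s' \inner{z}{z'}$, and taking expectations and applying Cauchy--Schwarz over the whole sample space gives
\begin{align}
  \Eb\sbr{s s' \inner{z}{z'}} \le \sqrt{\Eb\sbr{(s s')^2}}\,\sqrt{\Eb\sbr{\inner{z}{z'}^2}} \le \Lambda^2 \sqrt{\Eb\sbr{\inner{z}{z'}^2}},
\end{align}
where the last inequality uses $s^2, s'^2 \le \Lambda^2$.

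It then remains to recognise the second factor. Expanding the square and using that $(y,z)$ and $(y',z')$ are independent draws (the conditional independence given the history noted above), the cross terms factor, so that
\begin{align}
  \Eb\sbr{\inner{z}{z'}^2} = \sum_{i,j}\Eb\sbr{z_i z_j z'_i z'_j} = \sum_{i,j}\Eb\sbr{z_i z_j}\Eb\sbr{z'_i z'_j} = \Bigl\|\Eb\sbr{z z^\top}\Bigr\|_{\mathrm{Frob}}^2,
\end{align}
where the final equality uses that $z$ and $z'$ are identically distributed. Taking the square root and substituting back produces $\Lambda^2\bigl\|\Eb[z z^\top]\bigr\|_{\mathrm{Frob}} = L^2\alpha$, i.e.\ the claimed bound, with the outer-product matrix $\Eb[z' z^\top]$ read as the second-moment matrix $\Eb[z z^\top]$.

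The step I expect to be the crux --- and the reason for routing through Cauchy--Schwarz rather than a direct factorisation --- is the statistical coupling between the loss multiplier $s$ and the feature $z$. Because $s$ depends on $z$ through $\inner{z}{x}$, one cannot write $\Eb[s z] = \Eb[s]\,\Eb[z]$, and bounding the correlation by the product of the two mean gradients would be both incorrect and, for centered features, misleadingly optimistic (the mean gradient need not vanish even when $\Eb[z]=0$). Keeping $s$ and $z$ together and extracting only the magnitude bound $\abr{s}\le\Lambda$ sidesteps this entirely. The remaining care is in the fourth-order computation: the factorisation of $\Eb[z_i z_j z'_i z'_j]$ into a product of second moments is exactly where the independence (conditional on the past parameters) of the two samples is consumed, which is precisely the decorrelation hypothesis under which the lemma is invoked.
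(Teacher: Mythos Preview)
Your argument is correct and matches the paper's proof in substance: both bound the scalar loss derivatives by $\Lambda$, pass through Cauchy--Schwarz, and identify $\sqrt{\Eb[\inner{z}{z'}^2]}$ with the Frobenius norm of the second-moment matrix. The only cosmetic difference is ordering --- the paper first applies the pointwise bound $|ss'|\le\Lambda^2$ to obtain $\Lambda^2\,\Eb[|\inner{z}{z'}|]$ and then invokes $\Eb[|X|]\le\sqrt{\Eb[X^2]}$, whereas you apply Cauchy--Schwarz to the product $ss'\cdot\inner{z}{z'}$ first and bound $\Eb[(ss')^2]\le\Lambda^4$ afterward; both routes land on $\Lambda^2\sqrt{\Eb[\inner{z}{z'}^2]}$, and your observation that $\Eb_{z,z'}[z'z^\top]$ must be read as the second-moment matrix $\Eb[zz^\top]$ is well taken.
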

\label{lem:alpha}
Here we defined $\alpha$ to be the scaling factor which quantifies by
how much gradients are correlated.
\begin{proof}
  By construction we may bound the inner product for linear function
  classes using the Lipschitz constant $\Lambda$. This yields the
  upper bound
  \begin{align*}
    \Lambda^2 \Eb_{z,z'} \sbr{\abr{\inner{z}{z'}}}
    \leq \Lambda^2 \sbr{\Eb_{z,z'}\sbr{\inner{z}{z'}^2}}^{\frac{1}{2}}
    = \Lambda^2 \nbr{\Eb_{z,z'}\sbr{z' z^\top}}_\mathrm{Frob}
  \end{align*}
  Here the first term follows from Lipschitz continuity and the
  inequality is a consequence of the quadratic function being convex.
\end{proof}
We can apply this decorrelation inequality to the previous two
learning algorithms. Theorem~\ref{th:strong} allows a direct
tightening of the guarantees.
While the \emph{order} of the algorithm has not improved relative to
the worst case setting, we have considerably tighter bounds
nonetheless: for instance, for sparse data such as texts the
correlation terms are
rather small, hence the Frobenius norm of the second moment is small
as the second moment matrix is diagonally dominant. Generally,
$\nbr{\Eb\sbr{z z^\top}}_\mathrm{Frob} \leq L^2$ since
the gradient is maximized by having maximal value of the gradient of
$l(y,\inner{z}{x})$ \emph{and} an instance of $z$ with large norm.
Likewise, we may obtain a tighter version of Theorem~\ref{th:delay}.
\begin{corollary}
  \label{cor:delay-alpha}
  Given $\eta_t=\frac{\sigma}{\sqrt{t-\tau}}$ and the conditions of
  Lemma~\ref{lem:alpha} the regret of the delayed update algorithm is
  bounded by
  \begin{align}
    R[X] \leq \sigma L^2 \sqrt{T} + F^2 \frac{\sqrt{T}}{\sigma} +
    L^2 \alpha \frac{\sigma \tau^2}{2} + 2 L^2 \alpha \sigma \tau \sqrt{T}
%    R[x] & \leq \sigma L^2\left (\sqrt{T}+\frac{\tau(\tau+1)}{2}+2\tau\sqrt{T} \right )+\frac{F^2}{\sigma}\sqrt{T}
\end{align}
and consequently for $\sigma^2 = \frac{F^2}{2 \tau \alpha L^2}$
(assuming that $\tau \alpha \geq 1$) and $T \geq \tau^2$ we obtain the
bound
  \begin{align}
    \label{eq:optsigma-delay-alpha}
    R[X] \leq 4FL \sqrt{\alpha \tau T}
  \end{align}
\end{corollary}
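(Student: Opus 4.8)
The plan is to run the proof of Theorem~\ref{th:delay} essentially verbatim, changing only the estimate used for the gradient correlation term. In that proof the crude bound $\inner{g_{t-\tau-j}}{g_{t-\tau}} \leq L^2$ enters \emph{only} at \eq{eq:correlationterm}; everywhere else (the term $\frac{1}{2}\eta_t\nbr{g_{t-\tau}}^2$ handled in \eq{eq:step1}, and the telescoping $D$-terms handled in \eq{eq:step2}) one uses only the norm bound $\nbr{g_t}\leq L$ and the monotonicity of $\eta_t$. Hence the first two summands of the regret bound, $\sigma L^2\sqrt{T}$ and $F^2\sqrt{T}/\sigma$, are reproduced unchanged, and I only need to re-derive the two correlation summands with a sharper constant.

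The decisive step is to replace $\inner{g_{t-\tau-j}}{g_{t-\tau}}\leq L^2$ by the expected bound $\Eb\sbr{\inner{g_{t-\tau-j}}{g_{t-\tau}}}\leq L^2\alpha$ supplied by Lemma~\ref{lem:alpha}. To invoke that lemma I first check, for every index $1\leq j\leq\tau$ occurring in the correlation sum, that the two gradients are evaluated at parameters which do not depend on the current fresh draws: the update rule makes $x_{t-\tau}$ a function of $g_s$ with $s\leq t-2\tau-1$ and $x_{t-\tau-j}$ a function of $g_s$ with $s\leq t-2\tau-j-1$, so neither parameter sees $z_{t-\tau}$ or $z_{t-\tau-j}$, while $\abr{(t-\tau)-(t-\tau-j)}=j\leq\tau$ forces the two fresh draws to be mutually independent. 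Conditioning on the history up to time $\min(t-\tau-j,\,t-\tau)-1$ therefore places us exactly in the setting of Lemma~\ref{lem:alpha}, and linearity of expectation pushes the bound through the entire double sum. The arithmetic in the display following \eq{eq:correlationterm} is thereby reproduced with every appearance of $L^2$ replaced by $L^2\alpha$, yielding the expected correlation contribution $L^2\alpha\frac{\sigma\tau^2}{2}+2L^2\alpha\sigma\tau\sqrt{T}$ and establishing the first displayed inequality of the corollary.

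For the closed form I substitute $\sigma=F/(L\sqrt{2\tau\alpha})$. Each of the four terms then collapses to a multiple of $FL\sqrt{2\alpha\tau T}$: the first to a factor $\frac{1}{2\tau\alpha}$ of it, the second and fourth to exactly it, and the third to a factor $\frac{\tau}{4\sqrt{T}}$ of it, so that
\begin{align*}
  R[X]\leq FL\sqrt{2\alpha\tau T}\,\sbr{2+\frac{1}{2\tau\alpha}+\frac{\tau}{4\sqrt{T}}}.
\end{align*}
Invoking the hypotheses $\tau\alpha\geq 1$ and $T\geq\tau^2$ bounds the bracket by $2+\frac{1}{2}+\frac{1}{4}=\frac{11}{4}$, and since $\frac{11}{4}\sqrt{2}<4$ this gives $R[X]\leq 4FL\sqrt{\alpha\tau T}$, which is \eq{eq:optsigma-delay-alpha}. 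The assumption $\tau\alpha\geq 1$ is precisely the analogue of $\tau\geq 1$ in Theorem~\ref{th:delay} and is what keeps the $\frac{1}{2\tau\alpha}$ term harmless.

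The main obstacle is not the algebra, which is a rote rescaling of the Theorem~\ref{th:delay} computation, but the probabilistic bookkeeping in the second paragraph: one must ensure that the delay of $\tau$ simultaneously guarantees that both relevant parameters are measurable with respect to a past $\sigma$-field independent of the two current examples \emph{and} that those two examples are independent of each other, so that the conditional-independence argument flagged in the remarks opening this section legitimately replaces $L^2$ by $L^2\alpha$ inside the expectation. Making this fully rigorous is exactly the martingale argument the authors choose to suppress in favour of a streamlined exposition, and it is the only genuinely new ingredient beyond Theorem~\ref{th:delay} and Lemma~\ref{lem:alpha}.
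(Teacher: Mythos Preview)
Your proposal is correct and follows exactly the approach of the paper's own proof, which is given only as a sketch: re-run Theorem~\ref{th:delay} verbatim, replace the crude bound $\inner{g_{t-\tau-j}}{g_{t-\tau}}\leq L^2$ at \eq{eq:correlationterm} by the expected bound $L^2\alpha$ from Lemma~\ref{lem:alpha}, then substitute the rescaled $\sigma$ and invoke $\alpha\tau\geq 1$, $T\geq\tau^2$. Your measurability check and the explicit arithmetic for the bracket $2+\tfrac{1}{2\tau\alpha}+\tfrac{\tau}{4\sqrt{T}}\leq\tfrac{11}{4}$ are the details the paper omits, and they are carried out correctly.
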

\begin{proof}[sketch only]
  The proof is identical to that of Theorem~\ref{th:delay}, except
  that the terms linear and quadratic in $\tau$ are rescaled by a
  factor of $\alpha$. Substituting the new value for $\sigma$ and
  exploiting $\alpha \tau \geq 1$ proves the claim.
\end{proof}
%We now prove tightened-up bounds for smooth losses.

\subsection{Bounds for smooth gradients}

The key to improving the \emph{rate} rather than the \emph{constant}
with regard to which the bounds depend on $\tau$ is to impose further
smoothness constraints on $f_t$. The rationale is quite simple: we
want to ensure that small changes in $x$ do not lead to large changes
in the gradient. This is precisely what we need in order to show that
a small delay (which amounts to small changes in $x$) will not impact
the update that is carried out to a significant amount. More
specifically we assume that the gradient of $f$ is a
Lipschitz-continuous function. That is,
\begin{align}
  \label{eq:lipgrad}
  \nbr{\nabla f_t(x) - \nabla f_t(x')} \leq
  H \nbr{x - x'}.
\end{align}
Such a constraint effectively rules out piecewise linear loss
functions, such as the hinge loss, structured estimation, or the
novelty detection loss. Nonetheless, since this discontinuity only
occurs on a set of measure $0$ delayed stochastic gradient descent
still works very well on them in practice.
We need an auxiliary lemma which allows us to control the magnitude of
the gradient as a function of the distance from optimality:
%
%6891-6348-7813
\begin{lemma}
  \label{lem:valboundgrad}
  Assume that $f$ is convex and moreover that $\partial_x f(x)$ is
  Lipschitz continuous with constant $H$. Finally, denote by $x^*$
  the minimizer of $f$. In this case
  \begin{align}
    \label{eq:grobound}
    \nbr{\partial_x f(x)}^2 \leq 2 H [f(x) - f(x^*)].
  \end{align}
\end{lemma}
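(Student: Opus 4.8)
The plan is to exploit the standard quadratic upper bound (the ``descent lemma'') that follows purely from Lipschitz continuity of the gradient, and then to evaluate it at the point produced by a single gradient step of size $1/H$. First I would establish that for all $x,y$,
\begin{align*}
  f(y) \leq f(x) + \inner{\partial_x f(x)}{y - x} + \frac{H}{2} \nbr{y - x}^2.
\end{align*}
This comes directly from \eq{eq:lipgrad}: write $f(y) - f(x) - \inner{\partial_x f(x)}{y-x} = \int_0^1 \inner{\partial_x f(x + s(y-x)) - \partial_x f(x)}{y-x}\, ds$ and bound the integrand by $Hs\,\nbr{y-x}^2$ using Cauchy--Schwarz together with the Lipschitz assumption, then integrate $\int_0^1 Hs\,ds = H/2$.

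The key step is to choose $y = x - \frac{1}{H}\partial_x f(x)$, i.e.\ the point that minimizes the right-hand quadratic. Substituting this value collapses the two gradient-dependent terms, since $\inner{\partial_x f(x)}{-\frac{1}{H}\partial_x f(x)} + \frac{H}{2}\cdot\frac{1}{H^2}\nbr{\partial_x f(x)}^2 = -\frac{1}{2H}\nbr{\partial_x f(x)}^2$, giving
\begin{align*}
  f\rbr{x - \frac{1}{H}\partial_x f(x)} \leq f(x) - \frac{1}{2H}\nbr{\partial_x f(x)}^2.
\end{align*}

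Finally, since $x^*$ is the minimizer of $f$ we have $f(x^*) \leq f(y)$ for this particular $y$, so the left-hand side above is bounded below by $f(x^*)$. Rearranging $f(x^*) \leq f(x) - \frac{1}{2H}\nbr{\partial_x f(x)}^2$ yields exactly \eq{eq:grobound}. The only real subtlety is recognizing that the right comparison point is the gradient step with step size $1/H$, which makes the quadratic majorant as small as possible; everything else is routine. I would note in passing that convexity is not strictly needed for this argument beyond guaranteeing that $x^*$ is a global rather than merely local minimizer, so that the inequality $f(x^*)\leq f(y)$ is available for the shifted point.
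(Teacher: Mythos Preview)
Your proof is correct and is in fact the standard, textbook argument for this inequality. It is, however, genuinely different from what the paper does. The paper first reduces to a one-dimensional problem by following the negative gradient flow from $x$ down to $x^*$, arguing that along this curve the directed derivative is $-\nbr{\partial_x f}$ and still varies at rate at most $H$; then in one dimension it integrates the pointwise bound $f'(t) \leq \min(0, f'(0) + Ht)$ from $0$ to $x^*$ to obtain $f(x^*) - f(x) \leq -[f'(x)]^2/(2H)$. Your route via the descent lemma and a single gradient step of length $1/H$ is considerably shorter and avoids the gradient-flow machinery and the Picard--Lindel\"of appeal entirely. Both arguments ultimately use the same Lipschitz information, and both observe at the end that convexity is not really needed once one has a global minimizer $x^*$; the paper makes this same remark about Part~2 of its proof. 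The only mild caveat for your version is that it implicitly assumes the shifted point $x - \tfrac{1}{H}\partial_x f(x)$ lies in the domain of $f$, which is unproblematic here since the ambient space is all of $\RR^n$.
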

\begin{proof}
  The proof decomposes into two parts: we first show that the problem
  can be reduced to a one-dimensional setting and secondly we show
  that the claim holds in the one-dimensional case.

  {\bfseries Part 1:} For a given function $f$ with minimizer $x^*$
  and for an arbitrary starting point $x$ we can simply follow the opposite of the
  gradient field $-\partial_x f(x)$ starting at $x$ to arrive at
  $x^*$.\footnote{This is simply gradient descent and related to the
    Picard-Lindel\"of theorem. Without loss of generality we define
    $x^*$ to be the particular minimizer of $f$ that is reached by
    the going in the opposite direction of the gradient flow, whenever there is ambiguity in the choice.}
  The parametrized curve corresponding to the gradient flow is still
  monotonically decreasing, its directed gradient equals
  $-\nbr{\partial_x f(x)}$ along the curve, and moreover, distances
  between points on the curve are bounded from above by the length of
  the path between them. Hence, \eq{eq:grobound} holds for the now
  one-dimensional restriction of $f$. Note that the derivative along the path is strictly negative
  until the end, what one would expect as one heads to a minimum.

  {\bfseries Part 2:}
  Now assume that $f$ is defined on $\RR$. Without loss of
  generality we set $x = 0$ and let $x^* > 0$. Since the gradient
  cannot vanish any faster than the constraint of \eq{eq:grobound} it
  follows that for all $t \in [0, x^*]$ the gradient is bounded from
  above by
  \begin{align}
    f'(t) \leq \min(0, f'(0)+Ht)
  \end{align}
  Note that by construction, the gradient is strictly negative from 0 (inclusive) to $x^*$ (exclusive), hence
  the upper bound of zero. Define $t^* =
  -f'(0)/H$, such that $f'(0)+Ht^*=0$. Clearly $t^* \in [0, x^*]$ since $t^* > x^*$ would imply
  that $f'(x^*)<0$ and $x^*$ is not the minimizer.
  Integrating the lower bound on $f'(t)$ yields
  \begin{align}
    \nonumber
    f(x^*)-f(0) = \int_{0}^{x^*} f'(t) dt \leq \int_{0}^{x^*} \min(0, f'(0)+H t) dt =
    \int_{0}^{t^*} [H t + f'(0)] dt =-
    \frac{ [f'(0)]^2}{2H}
  \end{align}
  Multiplying both sides by $-2 H$ (and switching the inequality) proves the claim. Note that we did
  \emph{not} require in the second part of the proof that $f$ is
  convex or monotonic. This information was only used in part 1 to
  generate the gradient flow.
\end{proof}
This inequality will become useful to show that as we are approaching
optimality, the expected gradient $\partial_x f^*(x)$ also needs to
vanish. Since $g_t$ is assumed to change smoothly with $x$ this
implies that in expectation $g_t$ will vanish for $x \to x^*$ at a
controlled rate. We now state our main result:

\begin{theorem}
  \label{th:coolthm}
  In addition to the conditions of Theorem~\ref{th:delay} assume that the functions $f_i$ are \mbox{i.i.d.},
  $H \geq \frac{L}{4 F \sqrt{\tau}}$ and that $H$ also
  upper-bounds the change in the gradients as in Lemma~\ref{lem:valboundgrad}. Moreover, assume that we choose a learning rate
  $\eta_t = \frac{\sigma}{\sqrt{t-\tau}}$ with $\sigma =
  \frac{F}{L}$. In this case the risk is bounded by
  \begin{align}
    \Eb[R[X]] \leq \sbr{28.3 F^2 H + \frac{2}{3} FL +
      \frac{4}{3} F^2 H \log T} \tau^2 + \frac{8}{3} F L \sqrt{T}.
  \end{align}
\end{theorem}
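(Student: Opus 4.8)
The plan is to re-run the energy decomposition of Theorem~\ref{th:delay}, but to take expectations throughout and to exploit smoothness precisely in the gradient-correlation term, which was the sole source of the $\tau\sqrt{T}$ penalty before. Writing $\rho_s := \Eb[f^*(x_s) - f^*(x^*)]$ for the per-step expected risk, I would first record two consequences of the i.i.d.\ assumption. Since $x_s$ is a function of $f_1,\dots,f_{s-\tau-1}$ only, it is independent of $f_s$, so $\Eb[f_s(x_s)] = \Eb[f^*(x_s)]$ and hence $\Eb[R[X]] = \sum_s \rho_s$. By the same measurability bookkeeping, for $1 \le j \le \tau$ the functions $f_{t-\tau-j}$ and $f_{t-\tau}$ are both fresh and mutually independent given the history through $f_{t-\tau-j-1}$, while $x_{t-\tau-j}$ and $x_{t-\tau}$ are already determined by that history; conditioning therefore factorizes the inner product and yields $\Eb\inner{g_{t-\tau-j}}{g_{t-\tau}} = \Eb\inner{\nabla f^*(x_{t-\tau-j})}{\nabla f^*(x_{t-\tau})}$. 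This is the step the authors flag as needing a martingale justification.

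Next I would sum Lemma~\ref{lem:auxlemma} over $t=\tau+1,\dots,T+\tau$, take expectations, and bound the three resulting groups. The quadratic term $\sum_t \tfrac12 \eta_t \Eb\nbr{g_{t-\tau}}^2 \le \tfrac12 \sigma L^2 \sum_s s^{-1/2} \le FL\sqrt{T}$ and the telescoping Bregman term $\le \tfrac{F^2}{\sigma}\sqrt{T} = FL\sqrt{T}$ are handled exactly as in Theorem~\ref{th:delay} (using $\sigma = F/L$), and these supply the delay-free $\Theta(FL\sqrt{T})$ part of the bound. For the correlation group I would feed the factorization above into Cauchy--Schwarz and then into Lemma~\ref{lem:valboundgrad} applied to $f^*$, which is convex, $H$-smooth, and minimized at $x^*$: from $\nbr{\nabla f^*(x)}^2 \le 2H[f^*(x)-f^*(x^*)]$ one gets $\Eb\inner{g_{t-\tau-j}}{g_{t-\tau}} \le H(\rho_{t-\tau-j} + \rho_{t-\tau})$. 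Thus the whole delay penalty is re-expressed through the very quantities $\rho_s$ whose sum \emph{is} the regret.

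The decisive split is at the delay boundary. For the initialization block $t \le 2\tau$ there are only $O(\tau^2)$ correlation terms and the learning rate never exceeds $\eta_{\tau+1} = \sigma$; bounding these crudely (by $L^2$ as before, or by $\rho_s \le HF^2$ from smoothness together with $\nbr{x_s - x^*}^2 \le 2F^2$) produces the additive $\tau^2$ penalty, i.e.\ the $\frac23 FL + F^2H$ coefficients. For the remaining block the coefficient multiplying $\rho_m$ in the correlation sum is at most $2H\sigma\tau/\sqrt{m-\tau}$, which decays in $m$; rewriting the weighted sum $\sum_m m^{-1/2}\rho_m$ by summation by parts against the partial sums $S_m = \sum_{r\le m}\rho_r$ and inserting a sublinear (that is, $O(\sqrt{m})$) a-priori regret estimate turns the tail into a harmonic sum $\sum_m m^{-1}$, which is exactly where the $\log T$ factor is born. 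Combining the pieces reads off $\Eb[R[X]] \le \Theta(FL\sqrt{T}) + \Theta(F^2 H\tau^2\log T)$, and the hypothesis $H \ge L/(4F\sqrt{\tau})$ together with $\sigma = F/L$ is used to align the crude and absorbed constants so that they collapse into the stated coefficients.

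The hard part is closing this self-referential loop without losing the correct $\tau$-scaling. A naive fold-back that bounds every early $\rho_s$ by its maximum $HF^2$, or that simply absorbs all terms past the threshold $m \gtrsim H^2\sigma^2\tau^2$ where the coefficient drops below $\tfrac12$, re-introduces spurious high powers of $H$ (I would pick up an $H^3$ term) or a residual $\tau\sqrt{T}$ term that the theorem claims to avoid. The essential point is that the $\rho_s$ decrease only \emph{on average}, so the re-injected regret must be controlled cumulatively through $S_m$ rather than pointwise; obtaining a clean sublinear a-priori bound on $S_m$ at the step size $\sigma = F/L$ to seed this bootstrap, and confirming that the leading $\sqrt{T}$ term genuinely carries no factor of $\tau$, is the crux of the argument.
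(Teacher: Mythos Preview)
Your factorization $\Eb\inner{g_{t-\tau-j}}{g_{t-\tau}}=\Eb\inner{\nabla f^*(x_{t-\tau-j})}{\nabla f^*(x_{t-\tau})}$ is correct (both $f_{t-\tau-j}$ and $f_{t-\tau}$ are indeed independent of the pair $(x_{t-\tau-j},x_{t-\tau})$ under the delay) and is a genuinely different device from the one the paper uses. The paper does \emph{not} keep the two evaluation points separate; it first shifts $\nabla f_{t-\tau-j}$ from $x_{t-\tau-j}$ to $x_{t-\tau}$, paying a drift penalty $\le j\,\eta_{t-2\tau}L^2H$ per term. With both gradients at the \emph{same} point, independence collapses the product to $\nbr{\nabla f^*(x_{t-\tau})}^2\le 2H\rho_{t-\tau}$, giving a single $\rho$ per $t$ with coefficient $2H\tau\eta_{t-\tau}$. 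The accumulated drift $\sum_t \eta_{t-2\tau}^2\tau^2 L^2 H\le F^2H\tau^2\log T$ is exactly the $\log T$ term in the statement.

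The gap is in how you close the self-referential loop. The paper uses neither summation by parts nor an a-priori bound on $S_m$: it picks $t_0\approx 3\tau+64\sigma^2\tau^2H^2$ so that $2H\tau\eta_{t-\tau}\le\tfrac14$ for $t>t_0$, whence $\sum_{t>t_0}\Eb[C_t]\le\tfrac14\Eb[R[X]]+F^2H\tau^2\log T$; the $\tfrac14\Eb[R[X]]$ is moved to the left (this is the factor $4/3$), and for $t\le t_0$ one reverts to the crude $\inner{g}{g'}\le L^2$ bound of Theorem~\ref{th:delay}, costing $2L^2\sigma\tau\sqrt{t_0}\le 2\sqrt{112}\,F^2H\tau^2$ (the hypothesis $H\ge L/(4F\sqrt\tau)$ is used precisely to write $3\tau\le 48\sigma^2\tau^2H^2$ and thus $t_0\le 112\sigma^2\tau^2H^2$). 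Your Abel-summation route instead needs a seed $S_m=O(\sqrt m)$, but at $\sigma=F/L$ the only seed available is Theorem~\ref{th:delay}, which gives $S_m\le 2FL(1+\tau)\sqrt m+\tfrac12 FL\tau^2$. Feeding this in, the $2FL\tau\sqrt m$ part does produce an $F^2H\tau^2\log T$ term after multiplication by $2H\tau\sigma$, but the additive $\tfrac12 FL\tau^2$ survives $\sum_s s^{-3/2}$ as a constant and, after the same multiplication, leaves an $O(F^2H\tau^3)$ residual --- one power of $\tau$ too many for the stated bound. You flag exactly this danger in your last paragraph but do not resolve it. Incidentally, your cleaner factorization combined with the paper's threshold-and-absorb trick (rather than Abel summation) would close the argument and even drop the $\log T$, since you incur no drift term.
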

\begin{proof}
  Our proof is quite similar to that of Theorem~\ref{th:delay}. The
  key differences are that we may now bound  the \emph{expected}
  change between subsequent gradients in terms of the optimality gap
  itself. In particular:
\begin{align}
\Eb[\sum_{t=1}^T f_t(x_t)]&=\sum_{t=1}^T f^*(x_t).
\end{align}
Moreover, observe that:
\begin{align}
f^*(x^*)=\min_{x} \Eb[f_1(x)]&\geq \Eb[\min_{x} f_1(x)]
\end{align}
Moving the minimum inside the expectation makes it so that we can decide
which point after we know what function is drawn, as opposed to before, which makes the problem
easier and the expected cost lower. Note that $\sum_{t=1}^T f_t$, because it is a sum of random functions with mean $f^*$, is itself a random function with mean $Tf^*$. So the same
reasoning applies:
\begin{align}
T f^*(x^*)&\geq \Eb[\min_{x} \sum_{t=1}^T f_t(x)]
\end{align}
Therefore:
\begin{align}
\Eb[R[X]]=\Eb[\max_{x'}\sum_{t=1}^T f_t(x_t)-f_t(x')]&\geq \Eb[\sum_{t=1}^T \sbr{f^*(x_t)-f^*(x^*)}].
\end{align}
We can extract from the proof of Theorem~\ref{th:delay} that:
\begin{align}
R[X]&\leq \sigma L^2\sqrt{T}+\frac{F^2\sqrt{T}}{\sigma}
+\sum_{t=\tau + 1}^{T + \tau}
  \sum_{j=1}^{\min(\tau,t-(\tau+1))} \!\!\!\!\!\!\!\! \eta_{t-j} \inner{g_{t-\tau-j}}{g_{t-\tau}}\\
\Eb[R[X]]&\leq \sigma L^2\sqrt{T}+\frac{F^2\sqrt{T}}{\sigma}
+\sum_{t=\tau + 1}^{T + \tau}
  \sum_{j=1}^{\min(\tau,t-(\tau+1))} \!\!\!\!\!\!\!\! \eta_{t-j} \Eb[\inner{g_{t-\tau-j}}{g_{t-\tau}}]
\end{align}
  Consider the gradient correlation of \eq{eq:correlationterm} for $t
  > \tau$
  \begin{align}
    C_t := \sum_{j=1}^{\tau} \eta_{t-j}
    \inner{g_{t-\tau-j}}{g_{t-\tau}}.
  \end{align}
  We know that $\nbr{x_t - x_{t'}} \leq L \sum_{j=t}^{t'-1} \eta_j$
  since each gradient is bounded by $L$. By the smoothness constraint
  on the gradients this implies that
  $\nbr{\partial_x \left (f_i(x_t) - f_i(x_{t'})\right )} \leq L H \sum_{j=t}^{t'-1}
  \eta_j$. This means that as $\eta_t \to 0$ the error induced
  by the delayed update become a \emph{second order} effect as the
  algorithm converges. In summary, we may bound $C_t$ as follows:
  \begin{align}
    \nonumber
    C_t = & \sum_{j=1}^{\tau} \eta_{t-j} \inner{\nabla f_{t -
      \tau-j}(x_{t-\tau-j})}{\nabla f_{t - \tau}(x_{t-\tau})}\\
     = & \sum_{j=1}^{\tau} \eta_{t-j} \inner{\nabla f_{t -
      \tau-j}(x_{t-\tau})}{\nabla f_{t - \tau}(x_{t-\tau})} + \\
    \nonumber
  &
  \sum_{j=1}^{\tau} \eta_{t-j} \inner{\nabla f_{t-\tau-j}(x_{t-\tau-j}) - \nabla f_{t -\tau-
      j}(x_{t-\tau})}{\nabla f_{t - \tau}(x_{t-\tau})}  \\
    \nonumber
    \leq &
  \sum_{j=1}^{\tau} \eta_{t-j} \sbr{\inner{\nabla f_{t -\tau-
      j}(x_{t-\tau})}{\nabla f_{t - \tau}(x_{t-\tau})} +
  j \eta_{t - 2\tau} L^2 H}
  \end{align}
  Taking expectations of the upper bound is feasible, since all
  $f_{t-\tau-j}$ and $f_{t-\tau}$ are independent of each other and of
  their argument $x_{t-\tau}$. This yields the upper bound
  \begin{align}
    \Eb[C_t] & \leq \sum_{j=1}^{\tau} \eta_{t-j} \sbr{\nbr{\nabla
        f^*(x_{t-\tau})}^2 +   j \eta_{t - 2\tau} L^2 H} \\
    & \leq 2\eta_{t-\tau} \tau H
      \sbr{f^*(x_{t-\tau}) - f^*(x^*)} +   \eta_{t-2\tau}^2 \tau^2 H L^2
    \label{eq:slowchangebound}
  \end{align}
  The second inequality is obtained by appealing to
  Lemma~\ref{lem:valboundgrad} and by using the fact that the learning
  rate is monotonically decreasing.

  What this means is that once the stepsize of the learning rate is
  small enough, second order effects become essentially
  negligible. The overall reduction in the amount by which the bound
  on the expected regret  $f^*(x_t) - f^*(x^*)$ is reduced is given by
  $\tau H \eta_{t-\tau}$. If we wish to limit this reduction to
  $\frac{1}{4}$ this implies for a learning rate of $\eta_t =
  \sigma/\sqrt{t-\tau}$ that we should use \eq{eq:slowchangebound}
  only for $t \geq t_0 := 3\tau + 64 \sigma^2 \tau^2 H^2 \leq 112
  \sigma^2 \tau^2 H^2$ (the latter bounds holds by assumption on $H$).
  We now bound the part of the risk where the effects of the delay are
  sufficiently small. In analogy to \eq{eq:theprototype} we obtain
  \begin{align}
  \nonumber
\Eb[R[x]]&\leq \sigma L^2\sqrt{T}+\frac{F^2\sqrt{T}}{\sigma}
+\sum_{t=\tau + 1}^{T + \tau}
  \sum_{j=1}^{\min(\tau,t-(\tau+1))} \!\!\!\!\!\!\!\! \eta_{t-j} \Eb[\inner{g_{t-\tau-j}}{g_{t-\tau}}]\\
    \nonumber
\Eb[R[x]]&\leq \sigma L^2\sqrt{T}+\frac{F^2\sqrt{T}}{\sigma}
+\sum_{t=\tau + 1}^{t_0}
  \sum_{j=1}^{\min(\tau,t-(\tau+1))} \!\!\!\!\!\!\!\! \eta_{t-j} \Eb[\inner{g_{t-\tau-j}}{g_{t-\tau}}]+\sum_{t=t_0 + 1}^{T + \tau}
  \Eb[C_t]\\
    \nonumber
\Eb[R[x]]&\leq \sigma L^2\sqrt{T}+\frac{F^2\sqrt{T}}{\sigma}
+L^2 \frac{\sigma \tau^2}{2} + 2 L^2 \sigma \tau \sqrt{t_0}+\sum_{t=t_0 + 1}^{T + \tau}
  \Eb[C_t]
  \end{align}
  All but the last term can be bounded in the same way as in
  Theorem~\ref{th:delay}. The sum over the gradient norms can be
  bounded from above by $\sigma L^2 \sqrt{T}$ as in
  \eq{eq:step1}. Likewise, the sum over the divergences can be bounded
  by $\frac{F^2}{\sigma} \sqrt{T}$ as in \eq{eq:step2}. Lastly, since $2H\tau\eta_{t-\tau}\leq \frac{1}{4}$ when $t\geq t_0$ and $f^*(x_{t-\tau}) - f^*(x^*)\geq 0$, the
  sum over the gradient correlations is bounded as follows
  \begin{align}
    \Eb\sbr{ \sum_{t=t_0 + 1}^{T + \tau} C_t} \leq &
    \sum_{t=t_0 + 1}^{T + \tau} \frac{1}{4} \Eb\sbr{f^*(x_{t-\tau}) - f^*(x^*)} + \underbrace{\sum_{t=t_0 +
      1}^{T + \tau} L^2 H \eta_{t-2\tau}^2 \tau^2}_{\leq
    L^2 \tau^2 \sigma^2 H \log T}\\
    \Eb\sbr{ \sum_{t=t_0 + 1}^{T + \tau} C_t} \leq &
    \sum_{t=1}^{T} \frac{1}{4} \Eb\sbr{f^*(x_{t}) - f^*(x^*)}+
    L^2 \tau^2 \sigma^2 H \log T \\
    \Eb\sbr{ \sum_{t=t_0 + 1}^{T + \tau} C_t} \leq &
    \frac{1}{4}\Eb[R[x]]+
    L^2 \tau^2 \sigma^2 H \log T
  \end{align}
  For bounding the sum over $\eta_t^2$ we used a conversion of the sum
  to an integral and the fact that $t_0 > \tau+1$. %The first term
  %follows from requiring that $t_0 \geq \tau + 16 \sigma^2
  %\tau^2 H^2$.
  This first term equals the expected regret over the
  time period $[t_0,  T+\tau]$. Hence, multiplying the overall regret
  bound by $1/(1 - 1/4) = 4/3$ and combining the sum over $[t_0, T]$
  with Theorem~\ref{th:delay} (which covers the segment $[\tau, t_0]$)
  yields the following guarantee:
  \begin{align}
    \frac{3}{4} \Eb[R[X]] \leq &  \sigma L^2 \sqrt{T} + \frac{F^2}{\sigma} \sqrt{T} +
    \frac{1}{2} L^2 \sigma \tau^2 +  2 L^2
    \sigma \tau \sqrt{t_0} +
    L^2 \tau^2 \sigma^2 H \log T
  \end{align}
  Plugging in $\sigma = F/L$ and $t_0 \leq 112 F^2 \tau^2 H^2 / L^2$,
  using the fact that $\tau \geq 1$, and collecting terms yields
  \begin{align}
    \frac{3}{4} \Eb[R[X]] \leq \sbr{21.2 F^2 H + \frac{1}{2}FL +
      F^2 H \log T} \tau^2 + 2 F L \sqrt{T}.
  \end{align}
  Dividing by $\frac{3}{4}$ proves the claim.
\end{proof}
Note that the convergence bound which is $O(\tau^2 \log T + \sqrt{T})$
is governed by two different regimes. Initially, a delay of $\tau$ can
be quite harmful since subsequent gradients are highly correlated. At
a later stage when optimization becomes increasingly an
\emph{averaging} process a delay of $\tau$ in the updates proves to be
essentially harmless. The key difference to bounds of
Theorem~\ref{th:delay} is that now the \emph{rate} of convergence has
improved dramatically and is essentially as good as in sequential
online learning. Note that $H$ does not influence the
\emph{asymptotic} convergence properties but it significantly affects
the initial convergence properties.

This is exactly what one would
expect: initially while we are far away
from the solution $x^*$ parallelism does not help much in providing us
with guidance to move towards $x^*$. However, after a number of steps
online learning effectively becomes an averaging process for variance
reduction around $x^*$ since the stepsize is sufficiently small. In
this case averaging becomes the dominant force, hence parallelization
does not degrade convergence further. Such a setting is desirable ---
after all, we want to have good convergence for extremely large
amounts of data.

\subsection{Bounds for smooth gradients with strong convexity}

We conclude this section with the tightest of all bounds --- the
setting where the losses are all strongly convex and smooth. This
occurs, for instance, for logistic regression with $\ell_2$
regularization. Such a requirement implies that the objective function
$f^*(x)$ is sandwiched between two quadratic functions, hence it is
not too surprising that we should be able to obtain rates comparable
with what is possible in the minimization of quadratic functions. Also
note that the ratio between upper and lower quadratic bound loosely
corresponds to the condition number of a quadratic function --- the
ratio between the largest and smallest eigenvalue of the matrix
involved in the optimization problem.

The analysis is a combination of the proof techniques described in
Theorem~\ref{th:coolthm} in combination with
Theorem~\ref{th:strong}.
\begin{theorem}
  Under the assumptions of Theorem~\ref{th:strong}, in particular,
  assuming that all functions $f_i$ are \mbox{i.i.d} and strongly convex with constant
  $\lambda$ and corresponding learning rate $\eta_t = \frac{1}{\lambda
    (t - \tau)}$ and provided that the loss satisfies
  \eq{eq:grobound} for some constant $H$ we have the following bound on the expected regret:
  \begin{align}
    \Eb \sbr{R[X]} \leq \frac{10}{9} \sbr{\lambda\tau F^2+\sbr{\frac{1}{2} + \tau} \frac{L^2}{\lambda} [1+\tau+ \log
    (3\tau+(H\tau/\lambda))] + \frac{L^2}{2 \lambda} [1+\log T] +
    \frac{\pi^2 \tau^2 H L^2}{6\lambda^2 }}.
  \end{align}
\end{theorem}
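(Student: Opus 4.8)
The plan is to merge the two refinements already in hand: the strong-convexity telescoping of Theorem~\ref{th:strong} and the i.i.d.\ smooth-gradient control of the correlation term from Theorem~\ref{th:coolthm}. I would begin exactly as in Theorem~\ref{th:strong}, using strong convexity \eq{eq:strongcon} to write
\[
  R[X] \leq \sum_{t=\tau+1}^{T+\tau} \inner{x_{t-\tau}-x^*}{g_{t-\tau}} - \frac{\lambda}{2}\nbr{x_{t-\tau}-x^*}^2,
\]
expand each inner product with Lemma~\ref{lem:auxlemma}, and substitute $\eta_t = 1/(\lambda(t-\tau))$, i.e.\ $1/\eta_t = \lambda(t-\tau)$. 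The one departure from Theorem~\ref{th:strong} is that I would \emph{not} bound the gradient correlations crudely by $L^2$; instead I keep $C_t = \sum_{j=1}^{\min(\tau,t-(\tau+1))}\eta_{t-j}\inner{g_{t-\tau-j}}{g_{t-\tau}}$ intact and pass to expectations, exploiting the i.i.d.\ assumption exactly as in Theorem~\ref{th:coolthm}.

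The divergence terms together with the strong-convexity penalty $-\lambda D(x^*\|x_{t-\tau})$ telescope verbatim as in Theorem~\ref{th:strong}, contributing $\lambda\tau F^2$ (the surviving boundary terms are nonpositive and discarded), while the gradient-norm contribution $\frac{1}{2}\eta_t\nbr{g_{t-\tau}}^2 \leq \frac{1}{2}\eta_t L^2$ sums to $\frac{L^2}{2\lambda}\sum_{s=1}^{T}\frac{1}{s} \leq \frac{L^2}{2\lambda}(1+\log T)$, which is the third summand of the claim. The remaining work concentrates on $\sum_t \Eb[C_t]$, which I would split at a threshold $t_0$.

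For $t \leq t_0$ I would fall back on the worst-case estimate $\Eb[C_t] \leq \tau\,\eta_{\max(t-\tau,\tau+1)}L^2$ as in Theorem~\ref{th:strong}, so that this initial segment reproduces the Theorem~\ref{th:strong} bound over a horizon of length $t_0-\tau$ in place of $T$; this yields the term $\sbr{\frac{1}{2}+\tau}\frac{L^2}{\lambda}[1+\tau+\log(3\tau+H\tau/\lambda)]$, where $\log t_0$ replaces $\log T$. For $t > t_0$ I would instead invoke the refined i.i.d.\ bound already derived in \eq{eq:slowchangebound}, namely $\Eb[C_t] \leq 2\eta_{t-\tau}\tau H\,\Eb[f^*(x_{t-\tau})-f^*(x^*)] + \eta_{t-2\tau}^2\tau^2 H L^2$, which uses Lemma~\ref{lem:valboundgrad} and the independence of $f_{t-\tau-j},f_{t-\tau}$ from $x_{t-\tau}$. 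I would take $t_0$ of order $3\tau + H\tau/\lambda$: the $3\tau$ guarantees that $\eta_{t-2\tau}=1/(\lambda(t-3\tau))$ is well defined, and the $H\tau/\lambda$ part makes the first-order coefficient $2\eta_{t-\tau}\tau H$ small enough that the absorbed fraction equals $\frac{1}{10}$. The second-order term then sums to $\frac{\tau^2 H L^2}{\lambda^2}\sum_{k\geq 1}k^{-2} = \frac{\pi^2\tau^2 H L^2}{6\lambda^2}$, the final summand of the claim.

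It remains to collect the first-order terms. Since $2\eta_{t-\tau}\tau H \leq \frac{1}{10}$ for $t>t_0$ and $\sum_{t=\tau+1}^{T+\tau}\Eb[f^*(x_{t-\tau})-f^*(x^*)] = \sum_{s=1}^{T}\Eb[f^*(x_s)-f^*(x^*)] \leq \Eb[R[X]]$, this contribution is at most $\frac{1}{10}\Eb[R[X]]$; moving it to the left-hand side and dividing by $1-\frac{1}{10}$ produces the overall factor $\frac{10}{9}$. The main obstacle I anticipate is bookkeeping rather than any new idea: I must carry the telescoping of the $D(\cdot\|\cdot)$ and strong-convexity terms consistently across the split at $t_0$ so that no divergence term is double counted, and I must justify the self-referential step in which $\Eb[R[X]]$ reappears on the right (this rests on the i.i.d.\ reduction $\Eb[R[X]] \geq \sum_t \Eb[f^*(x_t)-f^*(x^*)]$ established at the start of the proof of Theorem~\ref{th:coolthm}) before it is safe to absorb. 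Pinning the constant inside $t_0$ so that both the well-definedness requirement $t>3\tau$ and the $\frac{1}{10}$ reduction hold at once is the only place where the bound's numerical constants are fixed.
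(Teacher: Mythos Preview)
Your proposal is correct and follows essentially the same route as the paper: both split the correlation sum at $t_0=3\tau+H\tau/\lambda$, bound the early segment via the crude $L^2$ estimate of Theorem~\ref{th:strong}, bound the late segment via the i.i.d.\ smooth-gradient estimate \eq{eq:slowchangebound} (yielding the $\pi^2/6$ term and a fraction of $\Eb[R[X]]$ to be absorbed), and divide through by $1-\tfrac{1}{10}$ to obtain the $\tfrac{10}{9}$ factor. The only cosmetic difference is that the paper packages the early-segment contribution directly as the Theorem~\ref{th:strong} bound with horizon $t_0$, whereas you separate out the divergence telescoping and the full-horizon gradient-norm sum first; both lead to the same displayed inequality (with the same mild double-counting of the $\tfrac{1}{2}$-term up to $t_0$).
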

\begin{proof}
  As before, we bound the expected correlation between gradients via
  \begin{align*}
    \Eb[C_t] & \leq
    \frac{2 \tau H}{\lambda (t - 2\tau)} \sbr{f^*(x_{t-\tau}) - f^*(x^*)} +
    \frac{\tau^2 H L^2}{\lambda^2 (t - 3\tau)^2}
    \text{ hence (if $t_0>3\tau$)} \\
    \sum_{t=t_0+1}^{T + \tau} \Eb[C_t] & \leq
    \frac{\pi^2 \tau^2 H L^2}{6\lambda^2 }+\frac{2 \tau H}{\lambda (t_0 - 2\tau + 1)} \sum_{t=t_0 +
      1-\tau}^{T}
    \Eb\sbr{f^*\rbr{x_{t}} - f^*(x^*)}
  \end{align*}
  Here the second inequality follows from the fact that the learning
  rate is decreasing and by the fact that $\sum_{n=1}^{\infty}\frac{1}{n^2}=\frac{\pi^2}{6}$. This allows
  us to combine both a bound governing the behavior until $t_0$ and
  a tightened-up bound once gradient changes are small. We obtain
  \begin{align}
    \sbr{1 - \frac{2 \tau H}{\lambda (t_0 - 2\tau + 1)}} \Eb
    \sbr{R[X]} \leq \lambda\tau F^2 +\sbr{\frac{1}{2} + \tau} \frac{L^2}{\lambda} [1+\tau+\log
    t_0] + \frac{L^2}{2\lambda} [1 + \log T] +
    \frac{\pi^2 \tau^2 H L^2}{6\lambda^2 }
    \nonumber
  \end{align}
  By choosing $t_0
  = 3\tau+(H\tau/\lambda)$ we see that the factor on the LHS is bounded by
  $0.9$. This also simplifies expressions on last term of the RHS and
  it yields the inequality
  \begin{align}
    0.9 \Eb \sbr{R[X]} \leq \lambda\tau F^2+\sbr{\frac{1}{2} + \tau} \frac{L^2}{\lambda} [1+\tau+\log
    (3\tau+(H\tau/\lambda))] + \frac{L^2}{2\lambda} [1 +\log T] +
    \frac{\pi^2 \tau^2 H L^2}{6\lambda^2 }.
  \end{align}
  Dividing by $0.9$ proves the claim.
\end{proof}
As before, this improves the rate of the bound. Instead of a
dependency of the form $O(\tau \log T)$ we now have the dependency
$O(\tau^2 + \log T)$. This is particularly desirable for large
$T$. We are now within a small factor of what a fully sequential
algorithm can achieve. In fact, we could make the constant arbitrary
small for large enough $T$.

\section{Bregman Divergence Analysis}
\label{sec:strong}

We now generalize Algorithm~\ref{alg:delay} to Bregman divergences.
In particular, we use the proof technique of \cite[Section 3.1]{ShaSin07}.
We begin by introducing Bregman divergences and strong convexity.
Denote by $\phi: \Bcal \to \RR$ a convex function. Then the
$\phi$-divergence between $x, x' \in \Bcal$ is defined as
\begin{align}
  \label{eq:dphi}
  D_\phi(x\|x') = \phi(x) - \phi(x') - \inner{x-x'}{\nabla \phi(x')}
\end{align}
Moreover, a convex function $f$ is strongly $\sigma$-convex with
respect to $\phi$ whenever the following inequality holds for all $x,
x' \in \Bcal$:
\begin{align}
  \label{eq:strong}
  f(x) - f(x') - \inner{x - x'}{\nabla f(x')} \geq \sigma D_\phi(x\|x').
\end{align}
Finally, for a convex function $f$ denote by $f^*$ the
Fenchel-Legendre dual of $f$. It is given by $f^*(y) = \sup_{x} \inner{x}{y}
- f(x)$. We are now able to define the implicit update version of
Algorithm~\ref{alg:delay}.
\begin{algorithm}[h]
  \caption{Delayed Stochastic Gradient Descent with Implicit Updates \label{alg:delay-implicit}}
  \begin{algorithmic}
    \STATE {\bfseries Input:} scalar $\sigma > 0$, delay $\tau \in
    \NN$ and convex function $\phi$.
    \STATE Set $x_1 \ldots, x_\tau = 0$ and compute corresponding $g_t = \nabla f_t(x_t)$.
    \FOR{$t = \tau + 1$ {\bfseries to} $T + \tau$}
    \STATE Obtain $f_t$ and incur loss $f_t(x_t)$
    \STATE Compute $g_t := \nabla f_t(x_t)$ and set $\eta_t =
    \frac{\sigma}{\sqrt{t- \tau}}$
    \STATE Update $x_{t+1} = \nabla \phi^*\rbr{\nabla \phi(x_t) - \eta_t g_{t-\tau}}$
    \ENDFOR
  \end{algorithmic}
\end{algorithm}
It is easy to check that Algorithm~\ref{alg:delay} is a special case
of Algorithm~\ref{alg:delay-implicit}. For $\phi(x) = \frac{1}{2} \nbr{x}^2$ we have that
$\phi^* = \phi$ and $\nabla \phi(x) = x$. If $\phi$ is the unnormalized logarithm we obtain delayed
exponential gradient descent. We state the following lemma without
proof, since it is virtually identical to that of \cite{ShaSin07}:

\begin{lemma}
  \label{th:strong-anotherone}
  Assume that $\phi$ is 1-strongly convex with respect to the norm
  associated with $\Bcal$. Then for any $x^* \in \Bcal$, and in
  particular the loss minimizer, the following
  holds
  \begin{align}
    \label{eq:delay-chain}
    \inner{x_t - x^*}{g_{t - \tau}} \leq
    \frac{D_\phi(x^*\|x_t) - D_\phi(x^*\|x_{t+1})}{\eta_t} + \eta_t
    \frac{1}{2} \nbr{g_{t-\tau}}^2_*
  \end{align}
\end{lemma}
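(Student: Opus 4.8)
The plan is to adapt the standard mirror-descent argument of \cite{ShaSin07} to the delayed setting; the key observation is that at a fixed time $t$ the analysis never needs to know \emph{which} gradient drove the update, so replacing $g_t$ by $g_{t-\tau}$ is purely cosmetic. First I would record the dual-space form of the implicit update: applying $\nabla\phi$ to both sides of $x_{t+1} = \nabla\phi^*(\nabla\phi(x_t) - \eta_t g_{t-\tau})$ and using $\nabla\phi^* = (\nabla\phi)^{-1}$ gives
\begin{align}
  \nabla\phi(x_t) - \nabla\phi(x_{t+1}) = \eta_t g_{t-\tau}.
\end{align}

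The backbone is the three-point identity for Bregman divergences: for any $a,b,c \in \Bcal$,
\begin{align}
  D_\phi(a\|b) + D_\phi(b\|c) - D_\phi(a\|c) = \inner{a-b}{\nabla\phi(c) - \nabla\phi(b)},
\end{align}
which is immediate from expanding all three divergences via \eq{eq:dphi} and cancelling the $\phi$ terms. I would instantiate it with $a = x^*$, $b = x_{t+1}$, $c = x_t$ and substitute the update relation above, so the right-hand side becomes $\inner{x^* - x_{t+1}}{\eta_t g_{t-\tau}}$. Rearranging gives
\begin{align}
  \eta_t \inner{x_{t+1} - x^*}{g_{t-\tau}} = D_\phi(x^*\|x_t) - D_\phi(x^*\|x_{t+1}) - D_\phi(x_{t+1}\|x_t).
\end{align}

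To pass from $x_{t+1}$ to $x_t$ on the left I would split $x_t - x^* = (x_{t+1} - x^*) + (x_t - x_{t+1})$, leaving the extra term $\eta_t \inner{x_t - x_{t+1}}{g_{t-\tau}} - D_\phi(x_{t+1}\|x_t)$; bounding this is the one nontrivial step, and it is exactly where both hypotheses enter. Cauchy--Schwarz in the norm/dual-norm pairing gives $\inner{x_t - x_{t+1}}{g_{t-\tau}} \leq \nbr{x_t - x_{t+1}} \nbr{g_{t-\tau}}_*$, while $1$-strong convexity of $\phi$ gives $D_\phi(x_{t+1}\|x_t) \geq \frac{1}{2}\nbr{x_{t+1} - x_t}^2$. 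Writing $s := \nbr{x_{t+1} - x_t}$, the extra term is therefore at most $\eta_t s \nbr{g_{t-\tau}}_* - \frac{1}{2} s^2$, a concave quadratic in $s$ whose maximum value is $\frac{1}{2}\eta_t^2 \nbr{g_{t-\tau}}_*^2$ (equivalently, this is the Fenchel--Young inequality $ab \leq \frac{1}{2}a^2 + \frac{1}{2}b^2$ applied to $a = \eta_t \nbr{g_{t-\tau}}_*$ and $b = s$). Adding this bound to the displayed identity and dividing by $\eta_t > 0$ yields \eq{eq:delay-chain} exactly. The only subtlety to watch is that the $s$-maximization must dominate the extra term uniformly in $s$, so that the unknown actual value of $\nbr{x_{t+1} - x_t}$ drops out of the final bound; everything else is routine algebra.
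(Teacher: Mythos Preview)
Your argument is correct and is precisely the standard mirror-descent derivation of \cite[Section~3.1]{ShaSin07} that the paper invokes in lieu of a proof; as you note, the delayed gradient $g_{t-\tau}$ plays no special role in the single-step inequality, so the substitution is indeed cosmetic. There is nothing to add.
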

\begin{theorem}
  \label{th:delay-bregmann}
  Assume that the implicit updates associated with $\phi$ are
  Lipschitz, that is
  \begin{align}
    \label{eq:lipcon}
    \nbr{\nabla \phi^*(\nabla \phi(x) - x') - x}
    \leq
    \Phi \nbr{x'}
  \end{align}
  for some $\Phi > 0$. Then the delayed update algorithm has a regret
  bound of the form
  \begin{align}
    \label{eq:bound-bregmann}
    R[X] \leq \sigma L^2 \sqrt{T} + F^2 \frac{\sqrt{T}}{\sigma} +
    L^2 \Phi \frac{\sigma \tau^2}{2} + 2 L^2 \Phi \sigma \tau \sqrt{T}
%    R[x] & \leq \sigma L^2\left (\sqrt{T}+\frac{\tau(\tau+1)}{2}+2\tau\sqrt{T} \right )+\frac{F^2}{\sigma}\sqrt{T}
\end{align}
and consequently for $\sigma^2 = \frac{F^2}{2 \tau \Phi L^2}$
(assuming that $\tau \Phi \geq 1$) and $T \geq \tau^2$ we obtain the
bound
  \begin{align}
    \label{eq:optsigma-delay-bregmann}
    R[X] \leq 4FL \sqrt{\Phi \tau T}
  \end{align}
\end{theorem}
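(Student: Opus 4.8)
The plan is to follow the proof of Theorem~\ref{th:delay} almost line for line, with Lemma~\ref{th:strong-anotherone} in the role of Lemma~\ref{lem:auxlemma} and the Lipschitz hypothesis \eq{eq:lipcon} replacing the crude correlation estimate $\inner{g_{t-\tau-j}}{g_{t-\tau}} \le L^2$. Convexity gives $R[X] \le \sum_{t=\tau+1}^{T+\tau} \inner{x_{t-\tau} - x^*}{g_{t-\tau}}$, but Lemma~\ref{th:strong-anotherone} controls the inner product at the \emph{current} iterate $x_t$ rather than the delayed one $x_{t-\tau}$. First I would bridge this gap by writing
\[
  \inner{x_{t-\tau} - x^*}{g_{t-\tau}}
  = \inner{x_t - x^*}{g_{t-\tau}} + \inner{x_{t-\tau} - x_t}{g_{t-\tau}},
\]
so that the first summand is exactly what the lemma bounds and the second is a drift term quantifying the cost of the delay.

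Summing the first summand over $t$ reuses the estimates of Theorem~\ref{th:delay} verbatim. Lemma~\ref{th:strong-anotherone} contributes $\frac{1}{2} \eta_t \nbr{g_{t-\tau}}_*^2$ together with the telescoping ratio $\bigl(D_\phi(x^*\|x_t) - D_\phi(x^*\|x_{t+1})\bigr)/\eta_t$. Bounding $\nbr{g_{t-\tau}}_* \le L$ turns the first into $\sigma L^2 \sqrt{T}$ as in \eq{eq:step1}, while the divergence sum telescopes---using $D_\phi(x^*\|x_t) \le F^2$, monotonicity of $1/\eta_t$, and discarding the final negative term---into $\frac{F^2}{\sigma}\sqrt{T}$ as in \eq{eq:step2}. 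These give the first two terms of \eq{eq:bound-bregmann}.

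The heart of the argument, and the step I expect to be the main obstacle, is the drift term $\inner{x_{t-\tau} - x_t}{g_{t-\tau}}$. In the Euclidean proof this equalled an explicit sum of gradient correlations, but through a general mirror map no such closed form survives. Instead I would telescope $x_t - x_{t-\tau} = \sum_{s=t-\tau}^{t-1}(x_{s+1}-x_s)$ and bound each step through \eq{eq:lipcon}: since $x_{s+1} = \nabla\phi^*\bigl(\nabla\phi(x_s) - \eta_s g_{s-\tau}\bigr)$, taking $x = x_s$ and $x' = \eta_s g_{s-\tau}$ gives $\nbr{x_{s+1} - x_s} \le \Phi \eta_s \nbr{g_{s-\tau}} \le \Phi \eta_s L$. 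The dual-pairing bound $\inner{a}{b} \le \nbr{a}\,\nbr{b}_*$ against $g_{t-\tau}$ then yields $\inner{x_{t-\tau} - x_t}{g_{t-\tau}} \le \Phi L^2 \sum_s \eta_s$, the sum running over the at most $\tau$ genuinely nonzero steps---strictly fewer during the initialization phase $\tau < t < 2\tau$ where $x_1 = \cdots = x_\tau$ freezes the early differences---exactly the truncation appearing in \eq{eq:correlationterm}. This is precisely $\Phi$ times the quantity bounded there.

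Consequently, summing the drift bound over $t$ reproduces, up to the factor $\Phi$, the telescoping and integral estimates already performed in Theorem~\ref{th:delay}, producing $L^2\Phi\frac{\sigma\tau^2}{2} + 2 L^2\Phi\sigma\tau\sqrt{T}$ and hence \eq{eq:bound-bregmann}. The optimization over $\sigma$ is then identical to that of Corollary~\ref{cor:delay-alpha} with $\alpha$ replaced by $\Phi$: substituting $\sigma^2 = \frac{F^2}{2\tau\Phi L^2}$ and invoking $\tau\Phi \ge 1$ and $T \ge \tau^2$ collapses the four terms to $4FL\sqrt{\Phi\tau T}$, establishing \eq{eq:optsigma-delay-bregmann}.
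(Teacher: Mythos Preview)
Your proposal is correct and follows essentially the same approach as the paper: both decompose $\inner{x_{t-\tau}-x^*}{g_{t-\tau}}$ into the Lemma~\ref{th:strong-anotherone} term at $x_t$ plus a drift $\inner{x_{t-\tau}-x_t}{g_{t-\tau}}$, telescope the drift into single-step increments, bound each via \eq{eq:lipcon} to obtain a $\Phi L^2$ contribution per step, and then feed the result through the estimates of Theorem~\ref{th:delay}. Your version is in fact slightly more careful than the paper's sketch---you track the truncated initialization phase and the exact sum $\sum_s \eta_s$ rather than the cruder $\tau\eta_{t-\tau}$---but the argument is the same.
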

\begin{proof}
  To apply the regret bounds we need to replace $\inner{x_t -
    x^*}{g_{t - \tau}}$ in \eq{eq:delay-chain} by a term which uses
  $x_{t - \tau}$ instead of $x_t$. This can be achieved by telescoping
  via
  \begin{align}
    \inner{x_t - x^*}{g_{t - \tau}} =
    \inner{x_{t-\tau} - x^*}{g_{t - \tau}}  + \sum_{j=0}^{\tau - 1}
    \inner{x_{t - j} - x_{t-j-1}}{g_{t - \tau}}
  \end{align}
  The key difference to before is that now the difference between
  subsequent weight vectors does \emph{not} constitute the gradient
  anymore. To obtain the same type of bounds that yielded
  Theorem~\ref{th:delay} we exploit continuity in the forward and
  reverse transform via \eq{eq:lipcon}.
  This yields $\inner{x_t - x^*}{g_{t - \tau}} \geq \inner{x_{t-\tau} -
    x^*}{g_{t - \tau}}  - \tau \eta_{t - \tau} \Phi L^2$.
  Plugging this bound into a sum over $T$ terms and using the argument
  as in Theorem~\ref{th:delay} proves the claim.
\end{proof}
Obtaining bounds that are as tight as Theorem~\ref{th:coolthm} is
subject of further work. We anticipate, however, that this may not be
quite as easy, in particular whenever functions can change
significantly after just seeing a small number of examples, as is the
case for exponentiated gradient descent. Here a delay can be
considerably more harmful than in the simple stochastic gradient
descent scenario.

\section{Experiments}

\begin{figure}[htb]
\begin{tabular}{cc}
\includegraphics[width=0.45\textwidth]{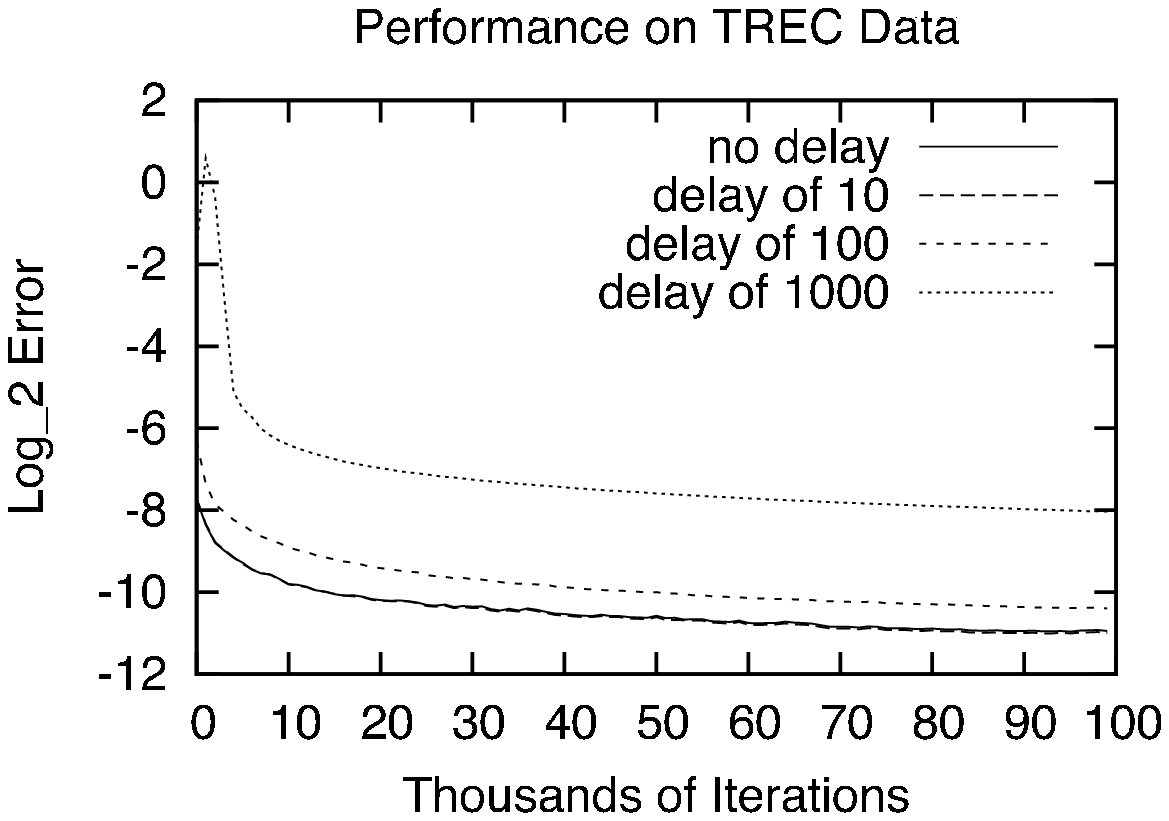}
&
\includegraphics[width=0.45\textwidth]{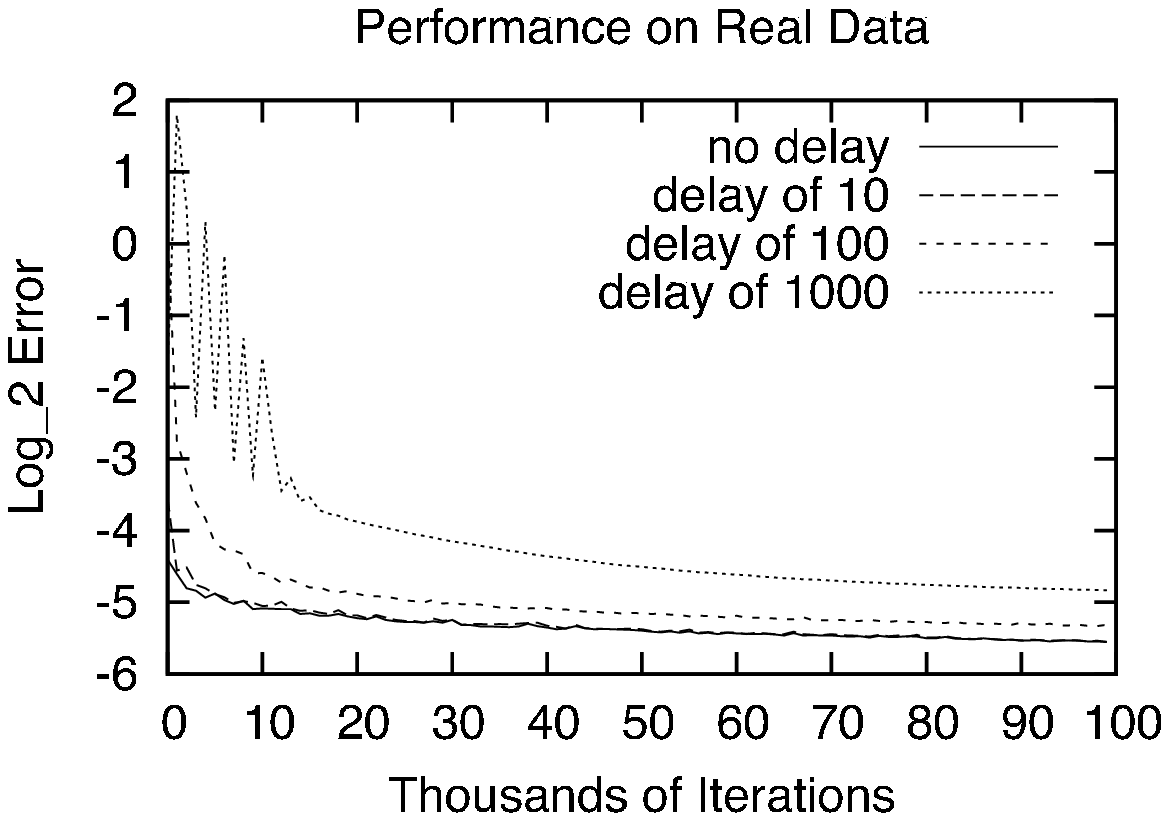}\\
a & b \\
\includegraphics[width=0.45\textwidth]{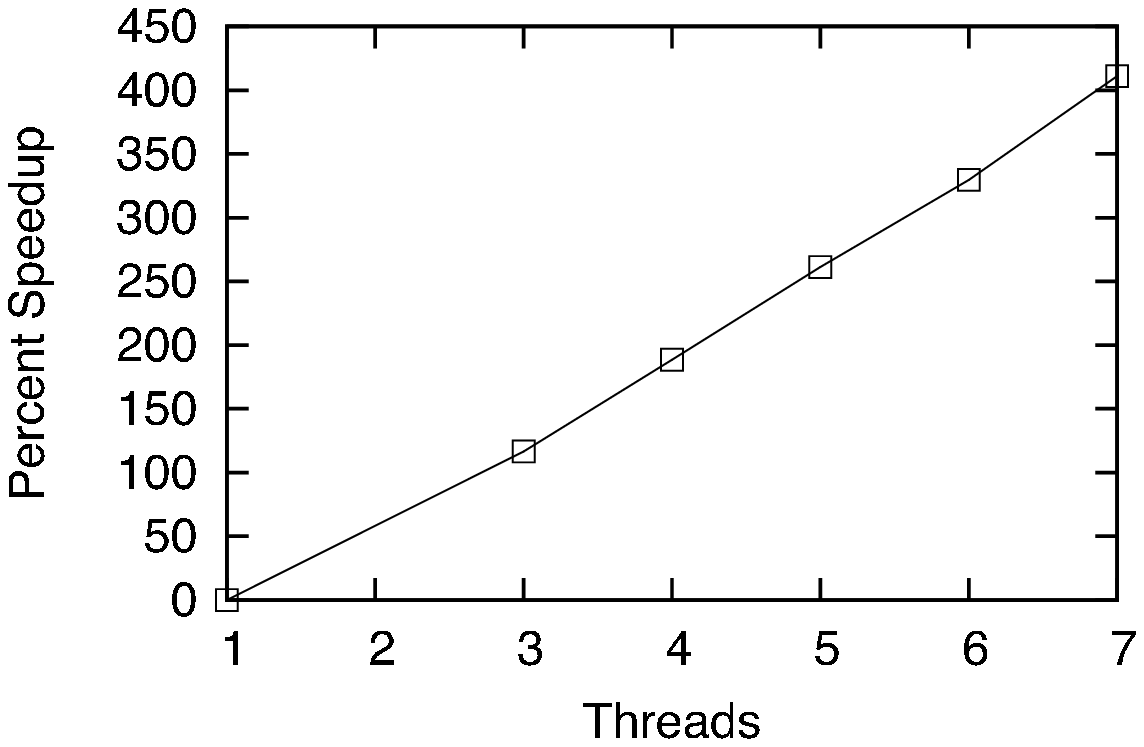}\\
c &
\end{tabular}
\caption{(a) Experiments with simulated delay on the TREC dataset (b) Experiments with simulated delay on the (harder) proprietary dataset (c) Time performance on a subset of the TREC dataset which fits into memory, using the quadratic representation. There was either one thread (a serial implementation) or 3 or more threads (master and 2 or more slaves). \label{fig:convergence}}
\end{figure}

In our experiments we focused on pipelined optimization. In
particular, we used two different training sets that were based on
e-mails: the TREC dataset~\cite[]{Cormack07}, consisting of 75,419
e-mail messages, and a proprietary (significantly harder) dataset of
which we took 100,000 e-mails. These e-mails were tokenized by
whitespace. The problem there is one of binary classification, that is
we are interested in minimizing
\begin{align}
  \label{eq:scalaloss}
  f_t(x) = l(y_t \inner{z_t}{x})
  \text{ where }
  l(\chi) =
  \begin{cases}
    \frac{1}{2} - \chi & \text{ if } \chi \leq 0 \\
    \frac{1}{2} (\chi - 1)^2 & \text{ if } \chi \in [0, 1] \\
    0 & \text{ otherwise}
  \end{cases}
\end{align}
Here $y_t \in \cbr{\pm 1}$ denote the labels of the binary
classification problem, and $l$ is the smoothed quadratic soft-margin
loss of \cite{LanLiStr07}. We used two feature representations: a
linear one which amounted to a simple bag of words representation, and
a quadratic one which amounted to generating a bag of word pairs
(consecutive or not).

To deal with high-dimensional feature spaces we used hashing
\cite[]{Weinbergeretal09}. In particular, for the TREC dataset we used
$2^{18}$ feature bins and for the proprietary dataset we used $2^{24}$
bins. Note that hashing comes with performance guarantees which state
that the canonical distortion due to hashing is sufficiently small for
the dimensionality we picked.

We tried to address the following issues in our simulation:
\begin{enumerate}
\item The obvious question is a systematic one: how much of a
  convergence penalty do we incur in practice due to delay. This
  experiment checks the goodness of our bounds. We checked convergence
  for a system where
  the delay is given by $\tau\in \{0,10,100,1000\}$.
\item Secondly, we checked on an actual parallel implementation
  whether the algorithm scales well. Unlike the previous check
  includes issues such as memory contention, thread synchronization,
  and general feasibility of a delayed updating architecture.
\end{enumerate}

\paragraph{Implementation}

The code was written in Java, although several of the fundamentals
were based upon VW~\cite[]{LanLiStr07}, that is, hashing and the choice
of loss function. We added regularization using lazy updates of the
parameter vector (i.e.\ we rescale the updates and occasionally
rescale the parameter). This is akin to Leon Bottou's SGD code. For robustness, we used $\eta_{t}=\frac{1}{\sqrt{t}}$.

All timed experiments were run on a single, 8 core machine with 32 GB
of memory. In general, at least 6 of the cores were free at any
given time.  In order to achieve advantages of
parallelization, we divide the feature space $\{1\ldots n\}$ into
roughly equal pieces, and assign a slave thread to each piece. Each
slave is given both the weights for its pieces, as well as the
corresponding pieces of the examples. The master is given the label of
each example. We compute the dot product separately on each piece, and
then send these results to a master. The master adds the pieces
together, calculates the update, and then sends that back to the
slaves. Then, the slaves update their weight vectors in proportion to
the magnitude of the central classifier. What makes this work quickly
is that there are multiple examples in flight through this dataflow
simultaneously. Note that between the time when a dot product is
calculated for an
example and when the results have been transcribed, the weight vector has
been updated with several other earlier examples and the dot products
have been calculated from several later examples. As a safeguard we
limited the maximum delay to 100 examples. In this case the compute slave
would simply wait for the pipeline to clear.

The first experiment that we ran was a simulation where we artificially added a delay between the update and the product (Figure~\ref{fig:convergence}a). We ran this experiment using linear features, and observed that the performance did not noticeably degrade with a delay of 10 examples, did not significantly degrade with a delay of 100, but with a delay of 1000, the performance became much worse.

The second experiment that we ran was with a proprietary dataset (Figure~\ref{fig:convergence}b). In this case, the delays hurt less; we conjecture that this was because the information gained from each example was smaller. In fact, even a delay of 1000 does not result in particularly bad performance.

Encouraged by these results, we tried to parallelize these exact experiments (results not shown). This turned out to be impossible: a serial implementation alone handled over 150,000 examples/second. However, when you consider more complex problems, such as with a quadratic representation, then a single example takes slightly above one millisecond. In this domain, we found that parallelization dramatically improved performance (Figure~\ref{fig:convergence}c). In this case, we loaded a small number of examples that could fit into memory,\footnote{ideally, one could design code optimized for quadratic representations, and never explicitly generate the whole example} and showed that the parallelization improved speed dramatically.

\section{Summary and Discussion}

Trying the type of delayed updates presented here is a natural approach to the problem: however, intuitively, having a delay of $\tau$ is like having a learning rate that is $\tau$ times larger. In this paper, we have shown theoretically how independence between examples can make the actual effect much smaller.

The experimental results showed three important aspects: first of all, small simulated delayed updates do not hurt much, and in harder problems they hurt less; secondly, in practice it is hard to speed up ``easy'' problems with a small amount of computation, such as e-mails with linear features; finally, when examples are larger or harder, the speedups can be quite dramatic.

\newpage

\bibliography{bibfile}

\end{document}